\newcommand{\R}{{\mathbb{R}}}
\newcommand{\E}{{\mathbb{E}}}
\newcommand{\N}{{\mathbb{N}}}
\newcommand{\F}{{\mathcal{F}}} 
\renewcommand{\P}{{\mathbb{P}}} 
\newcommand{\B}{{\mathcal{B}}}
\newcommand{\diff}[1]{\,\mathrm{d}#1}
\newcommand{\one}{\mathbb{I}}
\newcommand{\ee}{\mathrm{e}}
\newcommand{\triple}{{\vert\kern-0.25ex\vert\kern-0.25ex\vert}}
\newcommand{\RM}{{\mathrm{RM}}}
\newcommand{\SRM}{{\mathrm{SRM}}}
\theoremstyle{plain}
\newtheorem{definition}{Definition}[section]
\newtheorem{theorem}[definition]{Theorem}
\newtheorem{lemma}[definition]{Lemma}
\newtheorem{prop}[definition]{Proposition}
\newtheorem{assumption}[definition]{Assumption}
\theoremstyle{definition}
\newtheorem{remark}[definition]{Remark}
\begin{document}

\title[Two quadrature rules for stochastic integrals]
{
Two quadrature rules for stochastic It\^{o}-integrals with fractional 
Sobolev regularity 
}

\author[M.~Eisenmann]{Monika Eisenmann}
\address{Monika Eisenmann\\
  Technische Universit\"at Berlin\\
  Institut f\"ur Mathematik, Secr. MA 5-3\\
  Stra\ss e des 17.~Juni 136\\
  DE-10623 Berlin\\
  Germany}
\email{m.eisenmann@tu-berlin.de}

\author[R.~Kruse]{Raphael Kruse}
\address{Raphael Kruse\\
Technische Universit\"at Berlin\\
Institut f\"ur Mathematik, Secr. MA 5-3\\
Stra\ss e des 17.~Juni 136\\
DE-10623 Berlin\\
Germany}
\email{kruse@math.tu-berlin.de}

\keywords{stochastic integration, quadrature rules, fractional
Sobolev spaces, Sobolev--Slobodeckij norm} 
\subjclass[2010]{60H05, 60H35, 65C30}  

\begin{abstract}
  In this paper we study the numerical quadrature of a stochastic integral,
  where the temporal regularity of the integrand is measured in the fractional
  Sobolev--Slobodeckij norm in $W^{\sigma,p}(0,T)$, $\sigma \in (0,2)$, $p \in
  [2,\infty)$. We introduce two quadrature rules: The first is best suited for
  the parameter range $\sigma \in (0,1)$ and consists of a
  Riemann--Maruyama approximation on a randomly shifted grid. The second
  quadrature rule considered in this paper applies to the case of a  
  deterministic integrand of fractional Sobolev regularity with $\sigma \in
  (1,2)$. In both cases the order of convergence is equal to $\sigma$ with
  respect to the $L^p$-norm. 
  As an application, we consider the stochastic integration
  of a Poisson process, which has discontinuous sample paths. The theoretical
  results are accompanied by numerical experiments.
\end{abstract}

\maketitle

\section{Introduction}
\label{sec:intro}

In this paper we investigate the quadrature of stochastic It\^{o}-integrals. 
Such
quadrature rules are, for instance, important building blocks in numerical
algorithms for the approximation of stochastic differential equations (SDEs). 
For example, let $T \in (0,\infty)$ and $(\Omega_{W}, \F^W, (\F_t^W)_{t \in 
[0,T]},\P_W)$ be a filtered probability space satisfying the usual conditions. 
By $W \colon [0,T] \times \Omega_W \to \R$ we denote a standard
$(\F_t^W)_{t \in [0,T]}$-Wiener process.
Then, for a given continuous coefficient function $\lambda \colon [0,T] \to \R$
and a stochastically integrable process 
$G \colon [0,T] \times \Omega_W \to \R$ the numerical solution of the initial
value problem 
\begin{align*}
  \begin{cases}
    \diff{X(t)} = \lambda(t) X(t) \diff{t} + G(t) \diff{W(t)},
    \quad t \in [0,T],\\
    X(0) = 0,\\
  \end{cases}
\end{align*}
can be reduced to the quadrature of the It\^{o}-integral
\begin{align*}
  X(t) = \int_0^t \exp\Big( \int_s^t\lambda (u) \diff{u} \Big)
  G(s) \diff{W(s)}, \quad t \in [0,T],
\end{align*}
by the variation of constants formula. We refer to
\cite[Section~4.4]{kloeden1999} for further examples of SDEs which can be
reduced to quadrature problems. 

In the standard literature, as for example in
\cite{daun2017,heinrich2017,przybylowicz2009,przybylowicz2010,przybylowicz2015a, 
wasilkowski2001},
the regularity of the integrand is often measured in terms of H\"older norms.
However, in many cases
the order of convergence observed in numerical experiments is larger
than the theoretical order derived from the H\"older regularity. The starting
point of this paper is the observation that the gap between the 
theoretical and the experimental order of convergence can often be
closed if the regularity of the integrand is measured in terms of
fractional Sobolev spaces.

We then introduce two quadrature formulas: The first is a
Riemann--Maruyama quadrature rule but with a randomly 
shifted mesh. The second is a stochastic version of the trapezoidal
rule and is applicable to It\^{o}-integrals with deterministic integrands
possessing a higher order Sobolev regularity.  
As our main result we obtain error estimates with 
positive convergence rates even in the case of possibly 
discontinuous integrands. 

To give a more precise outline of this paper,
let $G \colon [0,T] \times \Omega_W \to \R$ be 
a stochastically integrable process as above.
We want to find a numerical approximation of the definite stochastic 
It\^{o}-integral
\begin{align}
  \label{eq:stochint}
  I[G] = \int_0^T G(s) \diff{W(s)}.
\end{align}
If $G \in C^{\gamma}([0,T];L^p(\Omega_W))$, $\gamma \in (0,1)$, $ p \in
[2,\infty)$, then one often applies the classical Riemann--Maruyama-type
quadrature formula  
\begin{align}
  \label{eq:RM}
  Q_N^\RM[G] = \sum_{j = 1}^N G(t_{j-1}) \big( W(t_j) - W(t_{j-1})
  \big), 
\end{align}
for the approximation of the stochastic integral $I[G]$,
where $N \in \N$ determines the equidistant step size $h = \frac{T}{N}$ and 
an equidistant partition of $[0,T]$ of the form
\begin{align}
  \label{eq:grid}
  \pi_h = \{ t_j := j h\, : \; j = 0,1,\ldots,N\} \subset [0,T].
\end{align}
Then, 
standard results in the literature, see for instance \cite{daun2017,
przybylowicz2015a,wasilkowski2001}, show that
\begin{align}
  \label{eq:HoelderEst}
  \big\| I[G] - Q_N^\RM[G] \big\|_{L^p(\Omega_W)} 
  \le C \|G \|_{C^\gamma( [0,T]; L^p(\Omega_W))} h^{\gamma}
\end{align}
for all $N \in \N$, where the constant $C$ is independent of $N$ and $h$. 

In this paper, we first focus on the case that the integrand 
$G \colon [0,T] \times \Omega_W \to \R$ is of lower temporal
regularity. To be more precise, we assume that
$G \in L^p(\Omega_W; W^{\sigma,p}(0,T))$ with $\sigma \in (0,1)$ and $p \in 
[2,\infty)$. See Equation~\eqref{eq:normWsp1} and \eqref{eq:normWsp2} 
below for the definition of the Sobolev--Slobodeckij norm. We emphasize 
that the space $W^{\sigma,p}(0,T)$
contains possibly discontinuous trajectories if $\sigma p < 1$. In particular,
several of the singular functions studied in \cite{przybylowicz2010} are
included in the fractional Sobolev spaces in a natural way.

In this situation we introduce a \emph{randomly shifted} version of the
Riemann--Maru\-yama quadrature rule \eqref{eq:RM} for the approximation of
\eqref{eq:stochint}. 
To this end, let $N \in \N$ and set $h = \frac{T}{N}$ as above. 
We will, however, not make use of the equidistant partition \eqref{eq:grid}.
Instead we introduce an additional probability space 
$(\Omega_{\Theta}, \F^{\Theta}, \P_{\Theta})$ as 
well as a uniformly distributed random variable 
$\Theta \colon \Omega_{\Theta} \to [0,1]$, that is assumed to be 
independent of the
stochastic processes $G$ and $W$ in \eqref{eq:stochint}. The value of 
$\Theta$ then determines a randomly shifted equidistant partition 
$\pi_h(\Theta)$ of $[0,T]$ defined by 
\begin{align}
  \label{eq:grid2}
  \pi_h(\Theta) = \{ 0 \} \cup \{ \Theta_j := (j - 1 + \Theta) h \, : \; j = 1,\ldots,N\}
  \cup \{T\} \subset [0,T],
\end{align}
where we also write $\Theta_0 := 0$ and $\Theta_{N+1} := T$.
Note that $\pi_h(\Theta)$ is strictly speaking not equidistant due to the 
addition
of the initial and final time point. However, it holds true that
\begin{align}
  \label{eq:dist}
  | \Theta_j - \Theta_{j-1}| \le h
\end{align}
for all $j \in \{1,\ldots,N+1\}$, where we have equality in \eqref{eq:dist} for
all $j \in \{2,\ldots,N\}$. The \emph{randomly shifted Riemann--Maruyama
quadrature rule} is then given by 
\begin{align}
  \label{eq:Q}
  Q_N^{\SRM}[G,\Theta] = \sum_{j = 1}^N G(\Theta_j) \big( W(\Theta_{j+1}) - 
  W(\Theta_{j})
  \big). 
\end{align}
In Section~\ref{sec:randQuad} we will show that
$Q^{\SRM}_N$ is well-defined for all progressively measurable $G
\in L^p(\Omega_W;W^{\sigma,p}(0,T))$. If $G$ satisfies an additional 
integrability condition at $t = 0$ we have 
\begin{align*}
  \big\| I[G] - Q_N^\SRM[G,\Theta] \big\|_{L^p(\Omega_W \times 
  \Omega_{\Theta})} 
  \le C( 1 +  \| G \|_{L^p(\Omega_W;W^{\sigma,p}(0,T))}) h^{\sigma},
\end{align*}
where $C \in (0,\infty)$ is a suitable constant independent of $N$ and $h$. For
a precise statement of our conditions on $G$ we refer to Assumption~\ref{as:G}
below.

We remark that quadrature formulas for stochastic integrals 
on random time grids are already studied in the literature. In contrast 
to our observation, however, it usually turns out that the additional
randomization does not yield any advantage over algorithms with deterministic
grid points if the regularity of the integrand is measured in terms 
of the H\"older norm. See, for instance, \cite{daun2017}. We also refer to
\cite{geiss2006} for a related observation in mathematical finance.  

In Section~\ref{sec:higherOrder} we further discuss the case of deterministic
integrands $g \colon [0,T] \to \R$ with regularity $g \in W^{1+\sigma,p}(0,T)$,
$\sigma \in (0,1)$, $p \in [2,\infty)$. Under this additional 
regularity assumption we obtain a higher order error estimate for a
stochastic version of a generalized trapezoidal quadrature rule 
given by
\begin{align}
  \label{eq:Trap}
  \begin{split}
    Q^{\mathrm{Trap}}_N [g] 
    &= \sum_{j=1}^{N} \frac{1}{2} (g(\theta_j) + g(\hat{\theta}_j)) (W(t_j)- 
    W(t_{j-1}))\\
    &\quad + \sum_{j=1}^{N} \frac{1}{h} (g(t_j) - g(t_{j-1}) ) 
    \int_{t_{j-1}}^{t_j} (t - t_{j -\frac{1}{2}} ) \diff{W(t)},
  \end{split}
\end{align}
where $t_{j - \frac{1}{2}} = \frac{1}{2} (t_{j-1} + t_j)$, $\theta_j = t_{j-1} + 
\theta h$ and $\hat{\theta}_j = t_{j-1} + (1-\theta) h$ for $\theta \in [0,1]$ 
and $j\in \{1,\dots,N\}$. 
Observe that in the deterministic case, where $\diff{W(t)}$ is replaced 
by $\diff{t}$, the second sum would disappear and we indeed recover the 
trapezoidal rule if $\theta = 0$. Further, the choice $\theta =
\frac{1}{2}$ yields the midpoint rule. 
In Section~\ref{sec:higherOrder} we also
show that the implementation of \eqref{eq:Trap} is straight-forward.

The remainder of this paper is organized as follows: In
Section~\ref{sec:str_error} we recall the definition of the fractional Sobolev
spaces $W^{\sigma,p}(0,T)$ and the associated Sobolev--Slobodeckij norm. In
addition, we fix some notation and collect a few martingale inequalities.
Section~\ref{sec:randQuad} and Section~\ref{sec:higherOrder} then contain the
error analysis of the quadrature rules \eqref{eq:Q} and \eqref{eq:Trap},
respectively. In Section~\ref{sec:num} we then present several numerical
experiments for the case of deterministic integrands with various degrees of
smoothness. In Section~\ref{sec:Poisson} we finally 
show that a Poisson process satisfies the conditions imposed on
the randomly shifted Riemann--Maruyama rule and state some numerical 
tests.


\section{Preliminaries} 
\label{sec:str_error}

First, let us recall the definition of fractional Sobolev spaces which are used
in order to determine the temporal regularity of the integrand. 
For $T \in (0,\infty)$ and $p \in [1,\infty)$ the \emph{Sobolev-Slobodeckij} 
norm of an integrable mapping $v \colon [0,T] \to \R$ is given by
\begin{align}
  \label{eq:normWsp1}
  \| v \|_{W^{\sigma,p}(0,T)} 
  = \Big( \int_0^T | v(t) |^p \diff{t} + \int_0^T \int_0^T \frac{|v(t) -
  v(s) |^p}{|t - s|^{1 + \sigma p}} \diff{t} \diff{s}
  \Big)^{\frac{1}{p}}
\end{align}
for $\sigma \in (0,1)$ and
\begin{align}
  \label{eq:normWsp2}
  \| v \|_{W^{\sigma,p}(0,T)} 
  = \Big( \int_0^T | v(t) |^p \diff{t} + \int_0^T | \dot{v}(t) |^p \diff{t}
  + \int_0^T \int_0^T \frac{|\dot{v}(t) - \dot{v}(s) |^p}{|t - s|^{1 + \sigma p}} 
  \diff{t} \diff{s}
  \Big)^{\frac{1}{p}}
\end{align}
for $\sigma \in (1,2)$.
We denote by $W^{\sigma,p}(0,T) \subset L^p(0,T)$ the subspace of all
$L^p$-integrable mappings $v \colon [0,T] \to \R$ such that
$\|v\|_{W^{\sigma,p}(0,T)} < \infty$. The space $W^{\sigma,p}(0,T)$ is called 
\emph{fractional Sobolev space}. It holds true that $W^{1,p}(0,T) \subset
W^{\sigma,p}(0,T)  \subset L^p(0,T)$ for all $\sigma \in (0,1)$. For further
details on fractional Sobolev spaces we refer the reader, for example, to 
\cite[Chapter~4]{demengel2012} or to the
survey papers \cite{dinezza2012} and \cite{simon1990}.

For the error analysis it is convenient to introduce a further probability space 
$(\Omega, \F, \P)$ which is of product form
\begin{align}
  \label{eq:Omega}
  (\Omega, \F, \P) := (\Omega_{W}\times\Omega_{\Theta}, \F^W\otimes 
  \F^{\Theta},
  \P_W \otimes \P_{\Theta}). 
\end{align}
Recall from Section~\ref{sec:intro} that $(\Omega_W,\F^W, (\F_t^W)_{t\in
[0,T]},\P_W)$ is the stochastic basis of the Wiener process $W$ and the
integrand $G$ in \eqref{eq:stochint}, while the family of random temporal grid
points $\pi_h^\Theta$ determined by the random
variable $\Theta$ is defined on $(\Omega_\Theta,\F^\Theta,\P_\Theta)$. In 
the following we denote by $\E_W[ \cdot ]$ and $\E_\Theta[ \cdot ]$ the
expectation with respect to the measures $\P_W$ and $\P_\Theta$, respectively.

For the error analysis with respect to the $L^p(\Omega)$-norm, $p \in
[2,\infty)$, we also require the following higher moment estimate of stochastic
integrals. For a proof we refer to \cite[Chapter 1, Theorem~7.1]{mao2008}. 

\begin{theorem} 
  \label{th:hoeld}
  Let $p \in [2,\infty)$ and $G \in L^p(\Omega_W; L^p(0,T))$ be stochastically
  integrable. Then, it holds true that
  \begin{align*}
    \E_W \Big[ \Big| \int_0^T G(t) \diff{W(t)} \Big|^p \Big] 
    &\leq \Big(\frac{p(p-1)}{2} \Big)^{\frac{p}{2}} T^{\frac{p-2}{2}} 
        \E_W \Big[ \int_0^T |G(t)|^p \diff{t}  \Big].
  \end{align*}
\end{theorem}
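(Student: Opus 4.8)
The plan is to apply It\^o's formula to $f(x) = |x|^p$ along the continuous (local) martingale $M_t := \int_0^t G(s)\diff{W(s)}$ and to turn the resulting identity into a self-improving inequality for $\phi(t) := \E_W[|M_t|^p]$. Since $p \ge 2$, the function $f$ is twice continuously differentiable with $f'(x) = p|x|^{p-2}x$ and $f''(x) = p(p-1)|x|^{p-2}$, so It\^o's formula gives
\begin{align*}
  |M_t|^p = p\int_0^t |M_s|^{p-2} M_s\, G(s) \diff{W(s)} + \frac{p(p-1)}{2}\int_0^t |M_s|^{p-2}|G(s)|^2 \diff{s}.
\end{align*}

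First I would localize with the stopping times $\tau_n := \inf\{t \in [0,T] : |M_t| \ge n\}$, using the convention $\tau_n := T$ when the infimum is taken over the empty set; by continuity of $M$ this ensures $|M_{\cdot\wedge\tau_n}| \le n$. For each fixed $n \in \N$ the stopped stochastic integral is a martingale starting at $0$ and therefore has vanishing expectation, so taking expectations in the localized version of the identity and discarding the indicator $\one_{\{s < \tau_n\}} \le 1$ from the nonnegative integrand yields
\begin{align*}
  \E_W\big[|M_{t\wedge\tau_n}|^p\big] \le \frac{p(p-1)}{2}\, \E_W\Big[\int_0^t |M_{s\wedge\tau_n}|^{p-2}|G(s)|^2 \diff{s}\Big].
\end{align*}

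Next I would estimate the right-hand side by applying H\"older's inequality twice with the conjugate exponents $\frac{p}{p-2}$ and $\frac{p}{2}$: once inside the $\diff{s}$-integral, writing $|M_{s\wedge\tau_n}|^{p-2} = (|M_{s\wedge\tau_n}|^p)^{(p-2)/p}$ and $|G(s)|^2 = (|G(s)|^p)^{2/p}$, and once in the expectation. Combined with Tonelli's theorem and the fact that $\phi_n(t) := \E_W[|M_{t\wedge\tau_n}|^p]$ is non-decreasing (because $|M_{\cdot\wedge\tau_n}|^p$ is a submartingale), this gives
\begin{align*}
  \E_W\Big[\int_0^t |M_{s\wedge\tau_n}|^{p-2}|G(s)|^2 \diff{s}\Big] \le \Big(\int_0^t \phi_n(s) \diff{s}\Big)^{\frac{p-2}{p}} \Big(\E_W\int_0^T|G(s)|^p \diff{s}\Big)^{\frac{2}{p}} \le \big(t\,\phi_n(t)\big)^{\frac{p-2}{p}} A^{\frac{2}{p}},
\end{align*}
where $A := \E_W\int_0^T|G(s)|^p\diff{s}$ is finite by hypothesis. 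Inserting this into the previous display produces the inequality $\phi_n(t) \le \frac{p(p-1)}{2}(t\,\phi_n(t))^{(p-2)/p}A^{2/p}$; since $\phi_n(t) \le n^p < \infty$, one may divide by $\phi_n(t)^{(p-2)/p}$ and raise both sides to the power $\frac{p}{2}$ to obtain $\phi_n(t) \le \big(\frac{p(p-1)}{2}\big)^{p/2} t^{(p-2)/2} A$. Since $M$ has continuous, hence bounded, sample paths on $[0,T]$, one has $\tau_n \to T$ and $M_{T\wedge\tau_n} \to M_T$ almost surely, and Fatou's lemma finally yields the claimed estimate with $t = T$.

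The computation as such is short. The only steps that require genuine attention are the localization, including the verification that the stopped stochastic integral has zero expectation, and the remark that $A < \infty$, which is what makes the division by $\phi_n(t)^{(p-2)/p}$ in the self-improving inequality legitimate; I expect this bookkeeping, rather than any deep difficulty, to be the main obstacle.
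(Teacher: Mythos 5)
Your proof is correct and is essentially the standard argument behind this inequality; the paper itself does not prove the statement but cites \cite[Chapter 1, Theorem~7.1]{mao2008}, whose proof proceeds exactly as you do (It\^o's formula for $|x|^p$, localization, a double application of H\"older's inequality, and the self-improving bound for $\E_W[|M_t|^p]$). The only bookkeeping worth making explicit is that the stopped integrand is dominated by $n^{p-1}|G(s)|$ with $G \in L^2(\Omega_W \times (0,T))$ (which follows from $G \in L^p(\Omega_W;L^p(0,T))$, $p \ge 2$, $T < \infty$), so the stopped stochastic integral is a genuine martingale, and that the division by $\phi_n(t)^{(p-2)/p}$ may be skipped when $\phi_n(t)=0$ since the bound is then trivial.
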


The error analysis also relies on a discrete time version
of the Burkholder--Davis--Gundy inequality. A proof is found in
\cite{burkholder1966}. 

\begin{theorem} \label{th:bdg}
  For each $p \in (1,\infty)$ there exist positive constants $c_p$ and $C_p$
  such that for every discrete time martingale $(X_n)_{n\in \N}$ and for every
  $n\in \N$ we have
  \begin{align*}
    c_p \Big \|  [X]_n^{\frac{1}{2}} \Big\|_{L^p(\Omega;\R^d)}
    \leq \Big \| \max_{i \in \{1,\dots,n\} } |X_i | \Big\|_{L^p(\Omega;\R^d)}
    \leq C_p \Big \| [X]_n^{\frac{1}{2}} \Big\|_{L^p(\Omega;\R^d)}
  \end{align*}
  where $[X]_n = \big| X_1 \big|^2 + \sum_{i=1}^{n-1} \big|X_{i+1} - X_i 
  \big|^2$ 
  denotes the quadratic variation of $(X_n)_{n\in \N}$ up to $n$.
\end{theorem}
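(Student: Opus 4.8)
Since the statement above is the classical discrete-time Burkholder--Davis--Gundy inequality, in practice I would simply cite it as the paper does; if a self-contained proof were required, I would follow Burkholder's good-$\lambda$ method. After relabelling I may assume $X_0 = 0$, so that $d_i := X_i - X_{i-1}$ are martingale differences for the underlying filtration and $[X]_n = \sum_{i=1}^n d_i^2$. Writing $X_n^* := \max_{i \le n}|X_i|$ and $S_n := [X]_n^{1/2}$, the goal is $c_p \|S_n\|_{L^p} \le \|X_n^*\|_{L^p} \le C_p \|S_n\|_{L^p}$ with $c_p, C_p$ depending only on $p$. Stopping the martingale at the first index where $|X_i|$ or $S_i$ exceeds a level $R$ and letting $R \to \infty$ at the end via Fatou's lemma, I may assume all $L^p$-norms appearing below are finite, which legitimises the manipulations.

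For the range $p \ge 2$ I would prove the upper bound by combining Doob's $L^p$-maximal inequality $\|X_n^*\|_{L^p} \le \frac{p}{p-1}\|X_n\|_{L^p}$ with a discrete It\^{o}/Taylor expansion of $|X_k|^p - |X_{k-1}|^p$, whose first-order term is a martingale difference and whose remainder is bounded by $C_p\big(|X_{k-1}|^{p-2}d_k^2 + |d_k|^p\big)$. Summing over $k$, taking expectations, using $\sum_i |d_i|^p \le S_n^p$ (valid as $p \ge 2$) together with H\"older's inequality in the form $\E\big[(X_n^*)^{p-2}S_n^2\big] \le \|X_n^*\|_{L^p}^{p-2}\|S_n\|_{L^p}^2$, and finally Young's inequality to absorb the power of $\|X_n^*\|_{L^p}$ onto the left-hand side, yields $\|X_n^*\|_{L^p} \le C_p\|S_n\|_{L^p}$. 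The reverse inequality follows by running the same scheme for $(S_k^2)^{p/2}$, using the identity $S_n^2 = X_n^2 - 2\sum_{k=1}^n X_{k-1}d_k$ and $\max_i |d_i| \le 2 X_n^*$.

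For $1 < p < 2$ the moment computation is no longer available, and I would instead establish the two good-$\lambda$ inequalities: for all $\lambda > 0$, $\beta > 1$ and $\delta \in (0,1)$,
\begin{align*}
  \P\big(X_n^* > \beta\lambda,\ S_n \le \delta\lambda\big)
  \le \frac{C\delta^2}{(\beta-1)^2}\,\P\big(X_n^* > \lambda\big),
\end{align*}
together with the symmetric statement in which the roles of $X_n^*$ and $S_n$ are exchanged. Each is obtained by stopping $X$ at the first index where $|X_i|$ exceeds $\lambda$ (respectively where $S_i$ exceeds $\delta\lambda$), applying $L^2$-orthogonality of the martingale differences to the stopped process on the relevant event, and Chebyshev's inequality. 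Multiplying by $p\lambda^{p-1}$, integrating in $\lambda$, and optimising over $\beta$ and $\delta$ (the standard good-$\lambda$ lemma) turns both inequalities into the comparability of $\|X_n^*\|_{L^p}$ and $\|S_n\|_{L^p}$ with constants depending only on $p$, for every $p \in (0,\infty)$; this in particular covers the asserted range $p \in (1,\infty)$ and subsumes the case $p \ge 2$ above.

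The routine parts are Doob's inequality and the moment bookkeeping. The delicate step is the pair of good-$\lambda$ inequalities: one must choose the stopping times so that the stopped processes remain martingales for the \emph{same} filtration, keep careful track of which increments contribute on the exceptional event, and ensure that all expectations stay finite (this is exactly where the initial localisation is used). Verifying that the resulting constants $c_p$ and $C_p$ depend on $p$ alone, and not on $n$ or on the particular martingale, also requires a little care in the concluding distributional computation.
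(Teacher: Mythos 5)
The paper gives no proof of this theorem: it simply cites Burkholder's 1966 paper on martingale transforms, so your primary suggestion to quote the classical result is exactly the paper's approach. Your supplementary sketch is a correct outline of the standard self-contained argument; the only quibbles are historical and minor: the good-$\lambda$ method belongs to the later Burkholder--Gundy line of work (the 1966 reference instead derives the square-function inequality from the $L^p$-boundedness of martingale transforms and then applies Doob, which is why the stated range is $p\in(1,\infty)$ rather than $p\in(0,\infty)$), and in discrete time the good-$\lambda$ inequalities require the additional control $\max_i|d_i|\le\min(2X_n^*,S_n)$ on the exceptional event to handle the overshoot of the stopping times --- a point you flag ("keep careful track of which increments contribute") but do not carry out.
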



\section{Error analysis of the lower order quadrature rule}
\label{sec:randQuad}

In this section we present the error analysis of the randomly shifted
Riemann--Maruyama quadrature rule defined in \eqref{eq:Q}. First, we state 
the assumptions on the integrand in the stochastic integral 
\eqref{eq:stochint}.

\begin{assumption}
  \label{as:G}
  The mapping $G \colon [0,T] \times \Omega_W \to \R$ is a $(\F_t^W)_{t \in 
  [0,T]}$-progressively measurable stochastic process such that there exist 
  $p \in [2,\infty)$ 
  and $\sigma \in (0,1)$ with
  \begin{align*}
    G \in L^p( \Omega_W ; W^{\sigma,p}(0,T) ).
  \end{align*}
  In addition, there exist $C_0 \in (0,\infty)$ and $h_0 \in (0,T]$
  with
  \begin{align}
    \label{eq:G_ini}
    \int_0^{h} \E_W\big[ |G(t)|^p \big] \diff{t} \le C_0 h^{\max(0,p\sigma - 
    \frac{p-2}{2}) } \quad \text{for all } h \leq h_0.
  \end{align}
\end{assumption}

Under Assumption~\ref{as:G} the stochastic process $G$ is stochastically
integrable and the It\^{o}-integral \eqref{eq:stochint} is well-defined. For 
more
details on stochastic integration we refer the reader, for instance, to
\cite[Chapter~17]{kallenberg2002} or \cite[Chapter~25]{klenke2014}.
Moreover, we stress that in the case $\sigma \in 
(0,\frac{1}{p})$ the stochastic process $G$ does not necessarily possess
continuous trajectories. In Section~\ref{sec:Poisson} we show that
a Poisson process satisfies all conditions of Assumption~\ref{as:G} for all
$p \in [2,\infty)$ and $\sigma \in (0,1)$ with $\sigma p < 1$.

\begin{remark}
  The condition \eqref{eq:G_ini} ensures that the $L^p(\Omega_W)$-norm
  of the process $G$ is not too explosive at $t = 0$. In Section~\ref{sec:num}
  we will show that Assumption~\ref{as:G} includes weak singularities of the form
  $[0,T] \ni t \mapsto t^{- \gamma}$ for $\gamma \in (0,\frac{1}{2})$. On the 
  other hand, if the integrand enjoys more regularity at $t=0$ but is nonzero,
  then one might apply the quadrature rule \eqref{eq:Q}
  to the integrand $\tilde{G}(t) := G(t) - G(0)$ to verify \eqref{eq:G_ini} 
  for larger values of $\sigma$.
\end{remark}

\begin{remark}
  The randomly shifted quadrature rule $Q_N^{\SRM}[G,\Theta]$ only 
  evaluates $G$
  on the randomized time points in $\pi_h(\Theta)$ determined by $\Theta 
  \sim \mathcal{U}(0,1)$. Because of this, the quadrature rule
  is independent of the choice of the representation of the equivalence class
  $G \in L^p(\Omega; W^{\sigma,p}(0,T))$ in the following sense:
  For all $\omega \in \Omega_W$ with $G(\cdot, \omega) \in W^{\sigma,p}(0,T)$
  let $G_i(\cdot, \omega)$, $i \in \{1,2\}$, be two representations of the same
  equivalence class in $W^{\sigma, p}(0,T)$. 
  Then it follows from 
  \begin{align*}
    G_1(t,\omega) = G_2(t,\omega)
  \end{align*}
  for almost all $t \in [0,T]$ that
  \begin{align*}
    G_1(\Theta_j,\omega ) = G_2(\Theta_j, \omega) \quad  \text{
    $\P_\Theta$-almost surely in } \Omega_\Theta
  \end{align*}
  for every $j \in \{1,\ldots,N\}$, and hence $G_1(\Theta_j) = G_2(\Theta_j)$  
  $\P$-almost surely on $\Omega = \Omega_W \times \Omega_\Theta$.
\end{remark}

First, let us prove a lemma, where we insert an arbitrary but fixed value 
$\theta \in [0,1]$ into \eqref{eq:Q} instead of the random variable $\Theta$.

\begin{lemma}
  \label{lem:main2}
  Let Assumption~\ref{as:G} be satisfied with $p \in [2,\infty)$, $\sigma \in
  (0,1)$, $C_0 \in (0,\infty)$, and $h_0 \in (0,T]$. Further, let $\theta \in
  [0,1]$ be arbitrary and $\theta_j = t_{j-1} + \theta h$ for $ j\in 
  \{1,\dots,N\}$ with $\theta_0 = 0$ and $\theta_{N+1} = T$. Then, there 
  exists $C(p) \in (0,\infty)$ depending only on $p \in [2,\infty)$ with 
  \begin{align*}
    &\big\| I[G] - Q^\SRM_N[G,\theta] \big\|_{L^p(\Omega_W)}\\ 
    &\quad \leq  C(p) 
    h^{\frac{p-2}{2p}} \Big(\int_{0}^{\theta_1} 
    \E_W \big[ |G(t)|^p \big] \diff{t} \Big)^{\frac{1}{p}}\\
    &\qquad + C(p)
    h^{\frac{p-2}{2p}} \Big( \sum_{j = 1}^N
    \Big(\int_{\theta_j}^{\theta_{j+1}} 
    \E_W \big[ |G(t)-  G(\theta_j)|^p \big]
    \diff{t} \Big)^{\frac{2}{p}} \Big)^{\frac{1}{2}}
  \end{align*}
  for all $N \in \N$ with $\frac{T}{N} = h \le h_0$ and almost every $\theta
  \in [0,1]$.
\end{lemma}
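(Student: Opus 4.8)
The plan is to decompose the error into the contribution of the short initial interval $[0,\theta_1]$, on which the quadrature rule contributes nothing, and a main part collecting the consistency errors on the shifted cells $[\theta_j,\theta_{j+1}]$, $j=1,\dots,N$. Since $\theta_0=0$, $\theta_{N+1}=T$, and these cells partition $[\theta_1,T]$, linearity of the It\^o integral gives
\[
  I[G] - Q^\SRM_N[G,\theta] = \int_0^{\theta_1} G(s)\diff{W(s)}
  + \sum_{j=1}^{N} \int_{\theta_j}^{\theta_{j+1}} \bigl( G(s) - G(\theta_j) \bigr) \diff{W(s)},
\]
and I would bound the $L^p(\Omega_W)$-norms of the two summands separately. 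Here one first checks that all stochastic integrals are well defined: because $G$ is progressively measurable, $G(\theta_j)$ is $\F^W_{\theta_j}$-measurable for each deterministic $\theta_j$, so each integrand $s\mapsto (G(s)-G(\theta_j))$ is progressively measurable on $[\theta_j,\theta_{j+1}]$, and the required $L^p$-integrability is inherited from $G\in L^p(\Omega_W;W^{\sigma,p}(0,T))\subset L^p(\Omega_W;L^p(0,T))$. The restriction to almost every $\theta\in[0,1]$ enters exactly because $t\mapsto\E_W[|G(t)|^p]$ is only in $L^1(0,T)$, hence finite for a.e.\ $t$; as $\theta\mapsto\theta_j=(j-1+\theta)h$ pushes Lebesgue measure forward to Lebesgue measure on $[(j-1)h,jh]$, for a.e.\ $\theta$ all grid points $\theta_1,\dots,\theta_N$ avoid the exceptional set simultaneously.

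For the initial term I would simply apply Theorem~\ref{th:hoeld} on the interval $[0,\theta_1]$ and use $\theta_1\le h$; this yields the bound $(\tfrac{p(p-1)}{2})^{1/2}\,h^{\frac{p-2}{2p}}\bigl(\int_0^{\theta_1}\E_W[|G(t)|^p]\diff{t}\bigr)^{1/p}$, which is precisely the first term on the right-hand side of the claim. Note that condition~\eqref{eq:G_ini} of Assumption~\ref{as:G} is not needed for the lemma itself; it is only invoked afterwards, when the lemma is turned into a convergence rate in $h$.

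For the main sum the key idea is to exploit its martingale structure rather than estimate term by term. Writing $\Delta_j = \int_{\theta_j}^{\theta_{j+1}}(G(s)-G(\theta_j))\diff{W(s)}$ and $M_k=\sum_{j=1}^k\Delta_j$, the finite sequence $(M_k)_{k=1}^N$ is a discrete-time martingale for the filtration $(\F^W_{\theta_{k+1}})_k$: indeed $\Delta_{k+1}$ is an It\^o integral over $[\theta_{k+1},\theta_{k+2}]$ of an integrand adapted from time $\theta_{k+1}$ onwards, so $\E_W[\Delta_{k+1}\mid\F^W_{\theta_{k+1}}]=0$. Applying the discrete Burkholder--Davis--Gundy inequality (Theorem~\ref{th:bdg}), using $|M_N|\le\max_{k}|M_k|$ and $[M]_N=\sum_{j=1}^N|\Delta_j|^2$, one obtains $\|M_N\|_{L^p(\Omega_W)}\le C_p\bigl\|\bigl(\sum_{j=1}^N|\Delta_j|^2\bigr)^{1/2}\bigr\|_{L^p(\Omega_W)}$. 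Since $p\ge 2$, rewriting the right-hand side as an $L^{p/2}(\Omega_W)$-norm and using the triangle inequality there gives $\bigl\|\sum_{j}|\Delta_j|^2\bigr\|_{L^{p/2}(\Omega_W)}\le\sum_j\|\Delta_j\|_{L^p(\Omega_W)}^2$, and a final application of Theorem~\ref{th:hoeld} on each cell $[\theta_j,\theta_{j+1}]$ (of length $\le h$) bounds $\|\Delta_j\|_{L^p(\Omega_W)}$ by $(\tfrac{p(p-1)}{2})^{1/2}h^{\frac{p-2}{2p}}\bigl(\int_{\theta_j}^{\theta_{j+1}}\E_W[|G(t)-G(\theta_j)|^p]\diff{t}\bigr)^{1/p}$. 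Collecting constants into a single $C(p)$ (enlarged if necessary to dominate the initial term as well) then gives the asserted inequality. I expect the only genuinely delicate points to be the bookkeeping that makes $(M_k)$ a bona fide martingale on the randomly shifted grid, and the use of the $L^{p/2}$-triangle inequality: it is precisely this step that replaces a crude $\sum_j\|\Delta_j\|_{L^p}$ by the square-summed expression, which is what eventually buys the extra half power of $h$ and yields order $\sigma$ rather than $\sigma-\tfrac12$ in the main theorem.
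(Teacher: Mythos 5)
Your proposal is correct and follows essentially the same route as the paper: the same decomposition into an initial integral over $[0,\theta_1]$ plus cell-wise consistency errors, the discrete martingale structure on the shifted grid, the Burkholder--Davis--Gundy inequality of Theorem~\ref{th:bdg}, the triangle inequality in $L^{p/2}(\Omega_W)$ to pass to $\sum_j\|\Delta_j\|_{L^p}^2$, and Theorem~\ref{th:hoeld} on each cell of length at most $h$. The only cosmetic difference is that the paper keeps the initial integral as the first term $E^1_\theta$ of the martingale and splits it off after applying BDG, whereas you split it off beforehand by the triangle inequality; the resulting bounds and constants coincide.
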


\begin{proof}
  Analogously to \eqref{eq:dist}, we have for all
  $j \in \{0, 1,\ldots,N\}$ and every $\theta \in [0,1]$ that
  \begin{align*}
    | \theta_{j+1} - \theta_j | \le h
  \end{align*}
  by definition of $(\theta_j)_{j \in \{0,\dots,N+1 \}}$. 
  We abbreviate the time discrete error term by
  \begin{align*}
    E^n_{\theta} 
    &= \int_{0}^{\theta_n } G(t) \diff{W(t)} 
    - \sum_{j = 1}^{n-1} G(\theta_j)\big( W(\theta_{j+1}) - W(\theta_{j}) \big)\\
    &= \int_{0}^{\theta_1 } G(t) \diff{W(t)} + \sum_{j = 1}^{n-1} 
    \int_{\theta_j}^{\theta_{j+1}} \big( G(t) -  G(\theta_j) \big) \diff{W(t)}
  \end{align*}
  for $n \in \{1,\dots,N+1 \}$. Then, we can write the error of the quadrature
  rule \eqref{eq:Q} as
  \begin{align*}
    \big\| I[G] - Q^\SRM_N[G,\theta] \big\|_{L^p(\Omega_W)}^p  
    = \E_W \big[  \big| E^{N+1}_{\theta} \big|^p \big].
  \end{align*}
  Furthermore, it follows from Assumption~\ref{as:G} and 
  Theorem~\ref{th:hoeld}
  that $E^n_{\theta} \colon \Omega_W \to \R$ is an element of
  $L^p(\Omega_W)$ for every $n \in \{1,\ldots,N+1\}$.  
  In addition, $E^n_{\theta}$ 
  is measurable with respect to the $\sigma$-algebra 
  $\F^W_{\theta_n}$.  
  Since we obtain for all $1 \leq m \leq n \leq N+1$ that
  \begin{align*}
    \E_W\Big[ E^n_{\theta} \Big| \F^W_{\theta_m} \Big]
    &= \E_W\Big[ \int_{0}^{\theta_1 } G(t) \diff{W(t)} 
    + \sum_{j = 1}^{n-1}  \int_{\theta_j}^{\theta_{j+1}} 
    \big( G(t) -  G(\theta_j) \big) 
    \diff{W(t)} \Big| \F^W_{\theta_m} \Big]\\
    &=  \int_{0}^{\theta_1 } G(t) \diff{W(t)} 
    + \sum_{j = 1}^{m-1}  \int_{\theta_j}^{\theta_{j+1}} 
    \big( G(t) -  G(\theta_j) \big) \diff{W(t)} \\
    &\quad + \E_W\Big[ \sum_{j = m}^{n-1}  
    \int_{\theta_j}^{\theta_{j+1}} 
    \big( G(t) -  G(\theta_j) \big) 
    \diff{W(t)} \Big| \F^W_{\theta_m} \Big]
    = E^m_{\theta},
  \end{align*}
  the process $(E^n_{\theta})_
  {n \in \{1,\dots,N+1\}}$ is a discrete time martingale with respect to the 
  filtration $\big(\F^W_{\theta_n}\big)_{n\in \{1,\dots,N+1\}}$.
  From an application of the Burkholder--Davis--Gundy inequality from
  Theorem~\ref{th:bdg} and the triangle inequality we obtain 
  \begin{align*}
    &\Big(  \E_W \Big[  
    \big| E^{N+1}_{\theta}  \big|^p \Big] \Big)^{\frac{1}{p}} \\
    &\quad \leq C_p \Big( \E_W \Big[  \Big( \big| E^1_{\theta} \big |^2 
    + \sum_{j=1}^{N} \big| E^{j+1}_{\theta} - E^j_{\theta} \big|^2  
    \Big)^\frac{p}{2} \Big] \Big)^{\frac{1}{p}}\\
    &\quad = C_p \Big( \Big\| \big| E^1_{\theta} \big |^2 
    + \sum_{j=1}^{N} \big| E^{j+1}_{\theta} - E^j_{\theta} \big|^2  
    \Big\|_{L^{\frac{p}{2}}(\Omega_W)} \Big)^{\frac{1}{2}}\\
    &\quad \leq C_p \Big(  \big\| E^1_{\theta} \big\|^2_{L^p(\Omega_W)}
    + \sum_{j = 1}^N \big\| E^{j+1}_{\theta} - E^j_{\theta}
    \big\|^2_{L^p(\Omega_W)} \Big)^\frac{1}{2} \\
    &\quad \leq C_p \big\| E^1_{\theta} \big\|_{L^p(\Omega_W)}
    + C_p \Big( \sum_{j = 1}^N \big\| E^{j+1}_{\theta} - E^j_{\theta}
    \big\|^2_{L^p(\Omega_W)} \Big)^\frac{1}{2} 
    =: C_p \big( X_1  + X_2 \big),
  \end{align*}
  where we will consider $X_1$ and $X_2$ separately in the following. By 
  making
  use of Theorem~\ref{th:hoeld} we obtain the estimate for $X_1$ 
  \begin{align*}
    X_1^{p}
    = \Big\| \int_{0}^{\theta_1} G(t) \diff{W(t)} 
    \Big\|_{L^p(\Omega_W)}^p 
    \leq \Big(\frac{p(p-1)}{2} \Big)^{\frac{p}{2}} h^{\frac{p-2}{2}} 
    \int_{0}^{\theta_1} 
    \E_W \big[ |G(t)|^p \big] \diff{t},
  \end{align*}
  since $\theta_1 \le h$.
  To estimate $X_2$ we again apply Theorem~\ref{th:hoeld} and obtain that  
  \begin{align*}
    X_2^2
    &= \sum_{j = 1}^N 
    \big\| E^{j+1}_{\theta} - E^j_{\theta}
    \big\|^2_{L^p(\Omega_W)}\\
    &= \sum_{j = 1}^N 
    \Big\| \int_{\theta_j}^{\theta_{j+1}} 
    \big( G(t) -  G(\theta_j) \big)
    \diff{W(t)} \Big\|^2_{L^p(\Omega_W)}\\
    &\le \frac{p(p-1)}{2} h^{\frac{p-2}{p}} 
    \sum_{j = 1}^N \Big(\int_{\theta_j}^{\theta_{j+1}}
    \E_W \big[ |G(t)-  G(\theta_j)|^p \big]
    \diff{t} \Big)^{\frac{2}{p}}.
  \end{align*}
  Altogether, this yields the assertion with
  $C(p) = C_p  (\frac{p(p-1)}{2} )^{\frac{1}{2}}$.
\end{proof}

\begin{lemma}
  \label{lem:measureable}
  Let Assumption~\ref{as:G} be satisfied with $p \in [2,\infty)$, $\sigma \in
  (0,1)$, $C_0 \in (0,\infty)$, and $h_0 \in (0,T]$. 
  For every $h = \frac{T}{N} \le h_0$, $N \in \N$, consider for $n \in
  \{1,\ldots,N\}$ and $\theta \in [0,1]$ the discrete time error process 
  \begin{align}
    \label{eq:E_n_theta}
    E^n_{\theta} 
    &= \int_{0}^{\theta_1 } G(t) \diff{W(t)} + \sum_{j = 1}^{n-1} 
    \int_{\theta_j}^{\theta_{j+1}} \big( G(t) -  G(\theta_j) \big) \diff{W(t)},
  \end{align}
  where $\theta_j = (j - 1 + \theta) h$, $j \in \{1,\ldots,N\}$. 
  Then the mapping 
  \begin{align*}
    [0,1] \times \Omega_W  \ni (\theta, \omega_W) \mapsto 
    E_\theta^n(\omega_W) \in \R
  \end{align*}
  is $\B(0,1) \otimes \F_{t_n}^W / \B(\R)$-measurable. 
\end{lemma}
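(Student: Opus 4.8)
The plan is to reduce the joint measurability of $(\theta,\omega_W)\mapsto E^n_\theta(\omega_W)$ to that of a few elementary building blocks and then invoke the stability of measurability under compositions, finite sums, and products. First I would get rid of the telescoping structure in \eqref{eq:E_n_theta}: since $\theta_1\le\theta_2\le\cdots\le\theta_n$ for every $\theta\in[0,1]$, one can rewrite
\begin{align*}
  E^n_\theta = \int_0^{\theta_n} G(t)\diff{W(t)} - \sum_{j=1}^{n-1} G(\theta_j)\big( W(\theta_{j+1}) - W(\theta_j) \big),
\end{align*}
with $\theta_j = (j-1+\theta)h$ and $\theta_n = (n-1+\theta)h \le nh = t_n$. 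Hence it suffices to show that, for every $j\in\{1,\dots,n\}$, the maps $(\theta,\omega_W)\mapsto G(\theta_j,\omega_W)$ and $(\theta,\omega_W)\mapsto W(\theta_j,\omega_W)$, as well as the map $(\theta,\omega_W)\mapsto \int_0^{\theta_n} G(t)\diff{W(t)}$, are $\B(0,1)\otimes\F^W_{t_n}/\B(\R)$-measurable.

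For the first two families I would use that $G$ is progressively measurable by Assumption~\ref{as:G} and that $W$, having continuous adapted trajectories, is progressively measurable as well; consequently the restrictions of $G$ and of $W$ to $[0,t_n]\times\Omega_W$ are $\B([0,t_n])\otimes\F^W_{t_n}/\B(\R)$-measurable. For each $j\in\{1,\dots,n\}$ the affine map $\theta\mapsto\theta_j=(j-1+\theta)h$ is continuous, hence Borel, and maps $[0,1]$ into $[(j-1)h,jh]\subseteq[0,t_n]$, so $(\theta,\omega_W)\mapsto(\theta_j,\omega_W)$ is measurable from $\B(0,1)\otimes\F^W_{t_n}$ into $\B([0,t_n])\otimes\F^W_{t_n}$. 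Composing with $G$ and with $W$ then yields the measurability of these two families.

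The main obstacle is the stochastic integral $\int_0^{\theta_n} G(t)\diff{W(t)}$, because for each fixed $\theta$ it is only defined up to a $\P_W$-null set, and one cannot argue fibrewise: the exceptional null sets in the identity ``$\int_0^t G\diff W = M_t$ $\P_W$-a.s.'' depend on $t$, while there are uncountably many relevant values $\theta_n(\theta)$. The remedy is to commit, once and for all, to a single continuous (equivalently, progressively measurable) modification $M=(M_t)_{t\in[0,T]}$ of the martingale $t\mapsto\int_0^t G(s)\diff{W(s)}$, whose existence is part of the standard construction of the It\^o integral (see \cite[Chapter~17]{kallenberg2002} or \cite[Chapter~25]{klenke2014}). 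Being continuous in $t$ and adapted, $M$ is progressively measurable, so $M|_{[0,t_n]\times\Omega_W}$ is $\B([0,t_n])\otimes\F^W_{t_n}/\B(\R)$-measurable, and since $\theta\mapsto\theta_n\in[t_{n-1},t_n]\subseteq[0,t_n]$ is Borel, the map $(\theta,\omega_W)\mapsto M_{\theta_n}(\omega_W)$ is $\B(0,1)\otimes\F^W_{t_n}/\B(\R)$-measurable. For every fixed $\theta$ we have $M_{\theta_n}=\int_0^{\theta_n}G(t)\diff{W(t)}$ $\P_W$-a.s., and a telescoping computation shows that replacing the stochastic integrals in \eqref{eq:E_n_theta} by the corresponding increments of $M$ changes $E^n_\theta$, for each fixed $\theta$, only on a $\P_W$-null set. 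With this canonical choice of representative, $E^n_\theta = M_{\theta_n} - \sum_{j=1}^{n-1} G(\theta_j)\big(W(\theta_{j+1}) - W(\theta_j)\big)$ is $\B(0,1)\otimes\F^W_{t_n}/\B(\R)$-measurable as a finite combination of the jointly measurable maps established above, which completes the argument.
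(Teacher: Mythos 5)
Your proof is correct and follows essentially the same route as the paper: joint measurability is obtained from the progressive measurability of $G$, $W$, and the indefinite It\^o integral, composed with the Borel maps $\theta\mapsto\theta_j$. You are merely more explicit than the paper about fixing a single progressively measurable modification of $t\mapsto\int_0^t G(s)\diff{W(s)}$ to handle the $t$-dependent null sets, which is a welcome clarification but not a different argument.
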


\begin{proof}
  Recall that for every stochastically integrable process $G \colon [0,T]
  \times \Omega_W \to \R$ the stochastic It\^o-integral
  \begin{align*}
    \int_0^t G(s) \diff{W(s)}
  \end{align*}
  considered as a stochastic process with respect to its upper integration
  limit $t \in [0,T]$ is $(\F_t^W)_{t \in [0,T]}$-progressively measureable.
  From this it follows that the mapping
  \begin{align*}
    [0,1] \times \Omega_W \ni (\theta, \omega_W) 
    \mapsto E_\theta^1(\omega_W) =  \Big( \int_0^{h \theta} G(s) \diff{W(s)} 
    \Big) (\omega_W)
  \end{align*}
  is $\B(0,1) \otimes \F_{t_1}^W / \B(\R)$-measureable. 
  
  For the same reasons, due to $\theta_j \le t_n$ for all $j \in
  \{0,\ldots,n\}$, and since $G$ is assumed to be $(\F_t^W)_{t \in
  [0,T]}$-progressively measureable we also obtain the claimed
  product measurability of all other summands in \eqref{eq:E_n_theta}.  
\end{proof}

We now state and prove the error estimate of the randomly shifted
Riemann--Maruyama quadrature rule defined in \eqref{eq:Q}.

\begin{theorem}
  \label{th:main2}
  Let Assumption~\ref{as:G} be satisfied with $p \in [2,\infty)$, $\sigma \in
  (0,1)$, $C_0 \in (0,\infty)$, and $h_0 \in (0,T]$ and let $\Theta \colon 
  \Omega \to [0,1]$ be a uniformly distributed random variable which is 
  independent of the stochastic processes $G$ and $W$. Then, there exists 
  $C(p) \in (0,\infty)$ depending only on $p \in [2,\infty)$ with 
  \begin{align*}
    &\big\| I[G] - Q^\SRM_N[G,\Theta] \big\|_{L^p(\Omega)} \\
    &\qquad \le C(p) \big( C_0^{\frac{1}{p}}
    h_0^{\max(0,\frac{p-2}{2p} - \sigma) } +
    T^{\frac{p-2}{2p}} \|G\|_{L^p(\Omega_W;W^{\sigma,p}(0,T))} \big) 
    h^{\sigma}
  \end{align*}
  for all $N \in \N$ with $\frac{T}{N} = h \le h_0$.
\end{theorem}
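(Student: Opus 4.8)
The plan is to start from Lemma~\ref{lem:main2}, which already provides a pointwise-in-$\theta$ bound on $\|I[G] - Q^\SRM_N[G,\theta]\|_{L^p(\Omega_W)}$, and to integrate its $p$-th power over $\theta \in [0,1]$ with the uniform measure. By Lemma~\ref{lem:measureable} the integrand is jointly measurable, so Fubini applies and
\begin{align*}
  \big\| I[G] - Q^\SRM_N[G,\Theta] \big\|_{L^p(\Omega)}^p
  = \int_0^1 \big\| I[G] - Q^\SRM_N[G,\theta] \big\|_{L^p(\Omega_W)}^p \diff{\theta}.
\end{align*}
Using $(a+b)^p \le 2^{p-1}(a^p + b^p)$, the task splits into bounding $\int_0^1 X_1(\theta)^p \diff{\theta}$ and $\int_0^1 X_2(\theta)^p \diff{\theta}$, where $X_1, X_2$ are the two terms on the right-hand side of Lemma~\ref{lem:main2}. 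Throughout I will repeatedly use that $\theta_{j+1}-\theta_j \le h$ and that $h \le h_0 \le T$.

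For the first term, $X_1(\theta)^p = C(p)^p h^{\frac{p-2}{2}} \int_0^{\theta_1} \E_W[|G(t)|^p]\diff{t}$ with $\theta_1 = \theta h \le h$. Since the inner integral is monotone in the upper limit, $\int_0^{\theta_1}\E_W[|G(t)|^p]\diff{t} \le \int_0^h \E_W[|G(t)|^p]\diff{t} \le C_0 h^{\max(0, p\sigma - \frac{p-2}{2})}$ by the initial condition~\eqref{eq:G_ini}. Hence $\int_0^1 X_1(\theta)^p\diff{\theta} \le C(p)^p C_0 h^{\frac{p-2}{2} + \max(0,p\sigma - \frac{p-2}{2})}$. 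When $p\sigma \ge \frac{p-2}{2}$ this is $C(p)^p C_0 h^{p\sigma}$; when $p\sigma < \frac{p-2}{2}$ it is $C(p)^p C_0 h^{\frac{p-2}{2}} = C(p)^p C_0 h^{p\sigma} h^{\frac{p-2}{2}-p\sigma}$, and bounding the extra factor by $h_0^{\frac{p-2}{2}-p\sigma}$ produces exactly the $h_0^{\max(0,\frac{p-2}{2p}-\sigma)}$ appearing in the statement after taking $p$-th roots.

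For the second term I would take the $\diff{\theta}$-integral inside and use Fubini once more on the double sum/integral structure:
\begin{align*}
  \int_0^1 \sum_{j=1}^N \Big( \int_{\theta_j}^{\theta_{j+1}} \E_W\big[|G(t)-G(\theta_j)|^p\big]\diff{t}\Big)^{\frac{2}{p}}\diff{\theta}.
\end{align*}
Since $p \ge 2$, the map $x \mapsto x^{2/p}$ is concave, so Jensen's inequality lets me pull the $\diff\theta$-integral inside each term and also replace the sum of $2/p$-powers by the $(2/p)$-power of the sum (again concavity on the $N$-term sum, or just subadditivity of $x\mapsto x^{2/p}$). This reduces matters to estimating $\sum_{j=1}^N \int_0^1 \int_{\theta_j}^{\theta_{j+1}} \E_W[|G(t)-G(\theta_j)|^p]\diff{t}\diff{\theta}$. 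Here is the crux of the argument: for each fixed $j$, I change variables so that the integral over $\theta$ of a quantity evaluated at $\theta_j = (j-1+\theta)h$ and $t \in (\theta_j,\theta_{j+1})$ becomes a double integral of $\E_W[|G(t)-G(s)|^p]$ over a region where $|t-s| \le 2h$ (roughly, $s$ ranges over $((j-1)h, jh)$ and $t$ over $(\theta_j, \theta_j + h)$). On that region $|t-s|^{1+\sigma p} \le (2h)^{1+\sigma p}$, so I can insert the Sobolev--Slobodeckij weight for free: $\E_W[|G(t)-G(s)|^p] \le (2h)^{1+\sigma p}\, \E_W[|G(t)-G(s)|^p]/|t-s|^{1+\sigma p}$. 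Summing the resulting local pieces over $j$ the regions overlap only boundedly (each point $(t,s)$ with $|t-s|\le 2h$ is covered $O(1)$ times), so the sum is controlled by $(2h)^{1+\sigma p}\int_0^T\int_0^T \E_W[|G(t)-G(s)|^p]/|t-s|^{1+\sigma p}\diff{t}\diff{s} \le C (2h)^{1+\sigma p}\|G\|_{L^p(\Omega_W;W^{\sigma,p}(0,T))}^p$. Combining, $\int_0^1 X_2(\theta)^p\diff\theta \le C(p)^p h^{\frac{p-2}{2}} \big( C h^{1+\sigma p}\|G\|^p\big)^{} = C(p)^p C\, h^{\frac{p-2}{2}+1+\sigma p}\|G\|^p$; noting $\frac{p-2}{2}+1 = \frac{p}{2}$ gives the power $h^{\frac{p}{2}+\sigma p}$... wait, that is too much, so I must be more careful and only pick up $h^{p\sigma}$ from the weight while the factor $h^{\frac{p-2}{2}}$ combines with an $h^{?}$ — the correct bookkeeping is that the region in $(t,s)$ has $\theta$-width contributing one factor of $h^{-1}$ (since $\theta$ is rescaled into an $s$-variable of length $h$), so the honest estimate is $\int_0^1 X_2^p\diff\theta \le C(p)^p h^{\frac{p-2}{2}} h^{-1} (2h)^{1+p\sigma}\|G\|^p = C'(p)\, T^{\frac{p-2}{2}} \cdots h^{p\sigma}$ modulo absorbing leftover positive powers of $h \le T$, matching the $T^{\frac{p-2}{2p}}\|G\|$ factor in the statement. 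The main obstacle is precisely this change-of-variables bookkeeping: tracking how the uniform average over the shift $\theta$ converts the discrete sum of increments $|G(t)-G(\theta_j)|^p$ into a genuine double integral against the Sobolev--Slobodeckij kernel with the correct power of $h$, and verifying the bounded-overlap property of the resulting regions so that no extra factor of $N$ creeps in.

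Finally I would assemble the two bounds via $\|I[G]-Q^\SRM_N[G,\Theta]\|_{L^p(\Omega)} \le 2^{(p-1)/p}\big((\int_0^1 X_1^p)^{1/p} + (\int_0^1 X_2^p)^{1/p}\big)$, substitute the two displays above, and factor out $h^\sigma$, absorbing the residual powers of $h$ into powers of $h_0$ or $T$ as needed to produce the stated constant $C(p)\big(C_0^{1/p} h_0^{\max(0,\frac{p-2}{2p}-\sigma)} + T^{\frac{p-2}{2p}}\|G\|_{L^p(\Omega_W;W^{\sigma,p}(0,T))}\big)$.
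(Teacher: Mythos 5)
Your overall strategy coincides with the paper's: start from Lemma~\ref{lem:main2}, use the joint measurability from Lemma~\ref{lem:measureable} to average over $\theta$, treat the first term with condition~\eqref{eq:G_ini}, and for the second term insert the Sobolev--Slobodeckij weight (legitimate because $|t-\theta_j|\le h$ on the domain of integration) and exploit $\Theta_j\sim\mathcal{U}(t_{j-1},t_j)$ to turn the $\theta$-average into the double integral defining the seminorm. Your ``change-of-variables bookkeeping'', including the factor $h^{-1}$ from $\diff{s}=h\,\diff{\theta}$ and the disjointness of the intervals $(t_{j-1},t_j)$, is exactly the paper's computation; no extra factor of $N$ creeps in there.

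There is, however, one genuine error in your reduction of the second term. You claim that concavity or subadditivity of $x\mapsto x^{2/p}$ lets you replace the sum $\sum_{j=1}^N a_j^{2/p}$ (raised to the power $p/2$) simply by $\sum_{j=1}^N a_j$. For $p>2$ this goes the wrong way: superadditivity of $x\mapsto x^{p/2}$ gives $\big(\sum_j a_j^{2/p}\big)^{p/2}\ge\sum_j a_j$, and the correct inequality (H\"older, or Jensen applied to the \emph{normalized} sum) is
\begin{align*}
\Big(\sum_{j=1}^N a_j^{2/p}\Big)^{p/2}\le N^{\frac{p-2}{2}}\sum_{j=1}^N a_j .
\end{align*}
The factor $N^{\frac{p-2}{2}}$ is not cosmetic: it is exactly what combines with the $h^{\frac{p-2}{2}}$ coming from Lemma~\ref{lem:main2} to produce the $T^{\frac{p-2}{2p}}$ in the statement. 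Without it, your exponent count gives $h^{\frac{p-2}{2}+p\sigma}\|G\|^p$ for the second term, i.e.\ a rate $\sigma+\frac{p-2}{2p}$ after taking $p$-th roots, which is strictly better than the claimed $h^{\sigma}$ for $p>2$ and is not what the argument actually yields --- this is precisely the inconsistency you notice yourself mid-proof (``that is too much''). Reinstating $N^{\frac{p-2}{2}}$ and using $N^{\frac{p-2}{2}}h^{\frac{p-2}{2}}=T^{\frac{p-2}{2}}$ repairs the argument and recovers the paper's proof essentially verbatim; everything else (the first term, the measurability, the final assembly) is correct.
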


\begin{proof}
  As in Lemma~\ref{lem:measureable} 
  we abbreviate the time discrete error process $E_\theta^n$, $n \in
  \{1,\ldots,N+1\}$, for each value of $\theta \in [0,1]$ by
  \begin{align*}
    E^n_{\theta} 
    &= \int_{0}^{\theta_1 } G(t) \diff{W(t)} + \sum_{j = 1}^{n-1} 
    \int_{\theta_j}^{\theta_{j+1}} \big( G(t) -  G(\theta_j) \big) \diff{W(t)},
  \end{align*}
  where $\theta_j = (j - 1 + \theta) h$.

  By $E_{\Theta}^n$ we then denote the composition of the mappings $\Omega \ni
  (\omega_W, \omega_\Theta) \mapsto ( \Theta(\omega_\Theta), \omega_W) \in
  (0,1) \times \Omega_W$ and $(0,1) \times \Omega_W \ni (\theta, \omega_W)
  \mapsto E_\theta^n(\omega_W) \in \R$. Clearly, the random variable
  $E_\Theta^n$ is then $\F^W_{t_n} \otimes \F^\Theta / \B(\R)$-product
  measureable for all $n \in \{1,\ldots,N+1\}$. 

  Next, we give an estimate of the $L^p(\Omega)$-norm of the
  error of the quadrature rule \eqref{eq:Q} 
  \begin{align*}
    \big\| I[G] - Q^\SRM_N[G,\Theta] \big\|_{L^p(\Omega)}^p  
    = \E_\Theta \big[ \E_W \big[  \big| E^{N+1}_{\Theta} \big|^p \big] \big].
  \end{align*}
  Using Lemma~\ref{lem:main2}, we now obtain that for almost every 
  $\omega_{\Theta} \in \Omega_{\Theta}$
  \begin{align*}
    &\Big(  \E_W \Big[  
    \big| E^{N+1}_{\Theta(\omega_{\Theta})}  \big|^p \Big] \Big)^{\frac{1}{p}} \\
    &\quad \leq  C(p) 
    h^{\frac{p-2}{2p}} \Big(\int_{0}^{\Theta_1(\omega_{\Theta})} 
    \E_W \big[ |G(t)|^p \big] \diff{t} \Big)^{\frac{1}{p}}\\
    &\qquad + C(p)
    h^{\frac{p-2}{2p}} \Big( \sum_{i = 1}^N
    \Big(\int_{\Theta_i(\omega_{\Theta})}^{\Theta_{i+1}(\omega_{\Theta})} 
    \E_W \big[ |G(t)-  G(\Theta_i(\omega_{\Theta}))|^p \big]
    \diff{t} \Big)^{\frac{2}{p}} \Big)^{\frac{1}{2}},
  \end{align*}
  where $C(p) =   C_p  (\frac{p(p-1)}{2} )^{\frac{1}{2}}$
  and $\Theta_j = (j - 1 + \Theta) h$. 
  Hence, after applying the norm $( \E_\Theta [ ( \cdot)^p ] )^{\frac{1}{p}}$ we
  get
  \begin{align}
    \label{eq3:term1}
    \begin{split}
      \big\| E^{N+1}_{\Theta}  \big\|_{L^p(\Omega)}
      &= \big(  \E_{\Theta} \big[ \E_W \big[  
      | E^{N+1}_{\Theta}  |^p \big] \big] \big)^{\frac{1}{p}} \\
      &\leq  C(p) h^{\frac{p-2}{2p}} \Bigg[ 
      \Big( \E_\Theta \Big[ \int_{0}^{\Theta_1}
      \E_W\big[ |G(t)|^p \big] \diff{t}  
      \Big] \Big)^{\frac{1}{p}} \\
      &\quad + \Big( \E_\Theta \Big[ \Big(  \sum_{i=1}^{N} \Big(
      \int_{\Theta_i}^{\Theta_{i+1}} \E_W \big[ 
      \big| G(t) -  G(\Theta_i) \big|^p \big] \diff{t} \Big)^{\frac{2}{p}}
      \Big)^{\frac{p}{2}}  \Big] \Big)^{\frac{1}{p}} \Bigg].
    \end{split}
  \end{align}
  Due to $h \le h_0$ we have by condition \eqref{eq:G_ini} for the first
  term that
  \begin{align*}
    \E_\Theta \Big[ \int_{0}^{\Theta_{1}} \E_W \big[
    \big| G(t) \big|^p \big] \diff{t} \Big]
    &= \frac{1}{h} \int_{0}^h \int_{0}^{\theta} 
    \E_W \big[ | G(t) |^p \big] \diff{t} \diff{\theta}\\
    &\le \int_0^h \E_W \big[ | G(t) |^p \big] \diff{t} 
    \le C_0 h^{\max(0,p\sigma - 
      \frac{p-2}{2}) }.
  \end{align*}
  Since $|t - \Theta_i| \le | \Theta_{i+1} - \Theta_i| \le h$ is fulfilled in
  the second summand on the right hand side of \eqref{eq3:term1}
  we further estimate the second sum by
  \begin{align*}
    & \E_\Theta \Big[ \Big(  \sum_{i=1}^{N} \Big(
    \int_{\Theta_i}^{\Theta_{i+1}} \E_W \big[ 
    \big| G(t) -  G(\Theta_i) \big|^p \big] \diff{t} \Big)^{\frac{2}{p}}
    \Big)^{\frac{p}{2}}  \Big] \\
    &\quad \le N^{\frac{p - 2}{2}} \sum_{i = 1}^N
    \E_\Theta \Big[ \int_{\Theta_i}^{\Theta_{i+1}} \E_W \big[ 
    \big| G(t) -  G(\Theta_i) \big|^p \big] \diff{t} \Big] \\
    &\quad \le N^{\frac{p-2}{2}} h^{1 + p \sigma} 
    \sum_{i = 1}^N \E_\Theta \Big[ \int_{\Theta_{i}}^{\Theta_{i+1}} 
    \frac{\E_W \big[ \big| G(t) - G(\Theta_i)
    \big|^p \big] }{| t - \Theta_i |^{1+p\sigma}} \diff{t} \Big] \\
    &\quad \le N^{\frac{p-2}{2}} h^{1+p\sigma} 
    \sum_{i = 1}^N \int_0^T \E_\Theta \Big[
    \frac{\E_W \big[ \big| G(t) - G(\Theta_i)
    \big|^p \big] }{| t - \Theta_i |^{1+p\sigma}} \Big] \diff{t} \\
    &\quad = N^{\frac{p-2}{2}} h^{1+p\sigma} \sum_{i = 1}^N \int_0^T \frac{1}{h}
    \int_{t_{i-1}}^{t_i} \frac{\E_W \big[ \big| G(t) - G(s)
    \big|^p \big] }{| t - s |^{1+p\sigma}} \diff{s} \diff{t}\\
    &\quad \leq N^{\frac{p-2}{2}} h^{p\sigma} \| G
    \|_{L^p(\Omega_W;W^{\sigma,p}(0,T))}^p, 
  \end{align*}
  where we made use of the fact that $\Theta_i \sim \mathcal{U}(t_{i-1},t_i)$ 
  in
  the second last step.
  The assertion then follows at once after inserting the last two estimates into
  \eqref{eq3:term1} and by
  noting that $N^{\frac{p-2}{2p}} h^{\frac{p-2}{2p}} =
  T^{\frac{p-2}{2p}}$ and $\max(0,\sigma - \frac{p-2}{2p}) +
  \frac{p-2}{2p} = \max(\frac{p-2}{2p}, \sigma) \ge \sigma$.
\end{proof}

\begin{remark}
  Let us briefly compare the error estimate of Theorem~\ref{th:main2} to the
  standard case with H\"older regularity, where it is assumed that
  $G \in C^\gamma([0,T];L^p(\Omega_W))$, $\gamma \in (0,1)$. In this case 
  the
  random shift of the mesh $\pi_h$ is not required and 
  the standard Riemann--Maruyama quadrature rule \eqref{eq:RM} converges with
  order $\gamma$.  
  
  Since every function in $C^\gamma([0,T];L^p(\Omega_W)) \cap 
  L^p((0,T) \times \Omega_W)$ is also an element of $L^p(\Omega_W; 
  W^{\sigma,p}(0,T) )$ for all $\sigma \in (0,\gamma)$ the error estimate in
  Theorem~\ref{th:main2} guarantees that $\gamma$ is essentially also a lower
  bound for the order of convergence of the quadrature rule \eqref{eq:Q}. 
  However, as we will also see in Section~\ref{sec:num}, one readily finds
  integrands $G \in C^\gamma([0,T];L^p(\Omega_W)) \cap L^p(\Omega_W;
  W^{\sigma,p}(0,T) )$ with $\sigma > \gamma$. For example, the process
  $G(t) := t^{\frac{1}{4}} + W(t)$, $t \in [0,T]$, is an element of
  $C^{\gamma}([0,T];L^2(\Omega_W))$ with $\gamma = \frac{1}{4}$. 
  However, 
  it is simple to verify that we also have $G \in L^2(\Omega_W;
  W^{\sigma,2}(0,T) )$ for every $\sigma \in (0,\frac{1}{2})$.
\end{remark}


\section{Higher order quadrature rule}
\label{sec:higherOrder}

In this section we present the details on the higher order quadrature rule 
\eqref{eq:Trap}.  To the best of our knowledge there 
is little literature on higher order quadrature rules for It\^{o}-integrals.
When estimating the solution of a stochastic differential equation with higher
order Runge--Kutta schemes, our quadrature rule with $\theta = 0$ appears 
as a by-product. See, for example, in \cite[Chapter 12]{kloeden1999} and 
\cite{Roessler2005} with classical and stricter regularity assumptions on the 
integrand. 
For further results on higher order Runge--Kutta schemes we also refer the 
reader to \cite[Chapter 1]{milstein2004}, where schemes containing a 
derivative of $g$ are considered.
Let us mention that the quadrature rule \eqref{eq:Trap} can also be seen as 
a derivative-free version of the Wagner--Platen scheme, see 
\cite{kloeden1999}. This has been studied in \cite{przybylowicz2009} under 
classical smoothness assumptions, that is, $g \in C^1([0,T])$ with a globally 
Lipschitz continuous derivative.  
For the case of arbitrary $\theta \in [0,1]$ as, for example, the midpoint rule 
when choosing $\theta = \frac{1}{2}$, there are no known results to us. 
Furthermore, the regularity assumption is the standard literature is stricter 
than in our work.

First we state the conditions for our error analysis. 

\begin{assumption}
  \label{as:g}
  There exist $p \in [2,\infty)$ and $\sigma \in (0,1)$ such that
  the mapping $g \colon [0,T] \to \R$ is an element of $W^{1+ \sigma, p}(0,T)$. 
\end{assumption}

Let us take note that Assumption~\ref{as:g} and the
Sobolev embedding theorem ensure the existence of a continuous 
representative of the integrand. Hence, the point evaluation of $g$ on the 
deterministic grid points in \eqref{eq:Trap} is well-defined.
Because of this the artificial randomization of the freely 
selectable parameter value $\theta \in [0,1]$ is not necessary.

Still, different choices of $\theta$ can affect the error. 
While the rate of convergence does not change when varying $\theta$, it 
can have an effect on the error constant.
For each value of $\theta$ we then define the two points
\begin{align*}
  \theta_j = t_{j-1} + \theta h, 
  \quad \hat{\theta}_j = t_{j-1} + (1-\theta) h,
  \quad j\in \{1,\dots,N\}, 
\end{align*}
where as before $h = \frac{T}{N}$, $N \in \N$, and $t_j = jh$, $j \in
\{0,\ldots,N\}$. Also we denote the midpoint between two grid points
$t_{j-1}$ and $t_{j}$ by $t_{j -\frac{1}{2}}$, that is, 
\begin{align*}
  t_{j -\frac{1}{2}} = \frac{t_{j-1} + t_{j}}{2}, \quad j \in \{1,\ldots,N\}. 
\end{align*} 
Then, the quadrature rule studied in this section is given by
\begin{align*}
  \begin{split}
    Q^{\mathrm{Trap}}_N [g] 
		&= \sum_{j=1}^{N} \frac{1}{2} (g(\theta_j) + g(\hat{\theta}_j)) (W(t_j)- 
		W(t_{j-1}))\\
		&\quad + \sum_{j=1}^{N} \frac{1}{h} (g(t_j) - g(t_{j-1}) ) 
		\int_{t_{j-1}}^{t_j} (t - t_{j -\frac{1}{2}} ) \diff{W(t)}.
  \end{split}
\end{align*}
Let us observe that the parameter value $\theta = 0$ yields the stochastic
trapezoidal rule. This choice of $\theta$ also admits the
practical advantage that it only requires $N+1$ function evaluations of the
integrand $g$, since then $\theta_j = t_{j-1}$ and $\hat{\theta}_j = t_j$. 
Furthermore, choosing $\theta = 0.5$ we obtain the stochastic midpoint 
rule. Therefore, our general approach offers an analysis that covers two well 
known rules at once.

\begin{theorem}
  \label{th:main3}
  Let Assumption~\ref{as:g} be satisfied with $p \in [2,\infty)$ and $\sigma
  \in (0,1)$. Then, for all $N \in \N$ with $\frac{T}{N} = h$ 
  it holds true that 
  \begin{align*}
    \big\| I[g] - Q^{\mathrm{Trap}}_{N}[g] \big\|_{L^p(\Omega)} 
    \le C_p \big( 2p(p-1) \big)^{\frac{1}{2}} T^{\frac{p-2}{2p}}
    h^{1 + \sigma} \| g \|_{W^{1+\sigma,p}(0,T)}.
  \end{align*}
\end{theorem}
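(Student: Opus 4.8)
The plan is to mimic the martingale decomposition from the proof of Theorem~\ref{th:main2}, but now exploiting the extra Sobolev regularity $g \in W^{1+\sigma,p}(0,T)$ through a local \emph{linear} approximation of $g$ on each subinterval rather than a piecewise-constant one. First I would write $I[g] - Q^{\mathrm{Trap}}_N[g]$ as a sum over $j\in\{1,\dots,N\}$ of local errors
\begin{align*}
  R_j = \int_{t_{j-1}}^{t_j} g(t) \diff{W(t)}
  - \tfrac{1}{2}\big(g(\theta_j)+g(\hat\theta_j)\big)\big(W(t_j)-W(t_{j-1})\big)
  - \tfrac{1}{h}\big(g(t_j)-g(t_{j-1})\big)\int_{t_{j-1}}^{t_j}(t-t_{j-\frac12})\diff{W(t)}.
\end{align*}
The key observation is that $\tfrac{1}{2}(g(\theta_j)+g(\hat\theta_j))$ is the average of $g$ over the symmetric pair of points, which should be recognized (up to a higher-order term) as the integral mean of the local linear interpolant, while $\tfrac{1}{h}(g(t_j)-g(t_{j-1}))$ is the slope of the secant line; together these reconstruct the $L^2(t_{j-1},t_j)$-projection of $g$ onto affine functions, written against $\diff{W}$. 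So I would introduce the affine function $\ell_j(t) = a_j + b_j(t - t_{j-\frac12})$ whose coefficients match the quadrature (with $b_j = \tfrac{1}{h}(g(t_j)-g(t_{j-1}))$ and $a_j$ the symmetric average), and rewrite $R_j = \int_{t_{j-1}}^{t_j}\big(g(t)-\ell_j(t)\big)\diff{W(t)}$, checking that the stochastic integrals of $(t-t_{j-\frac12})$ and of the constant against $\diff W$ reproduce exactly the two correction terms.

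**Martingale structure and BDG.**
Next, the partial sums $M_n = \sum_{j=1}^{n} R_j$ form a discrete-time martingale with respect to $(\F^W_{t_n})_n$: indeed $R_j$ is $\F^W_{t_j}$-measurable and, since $g$ is deterministic and $\ell_j$ is deterministic, $\E_W[R_j \mid \F^W_{t_{j-1}}] = \E_W\big[\int_{t_{j-1}}^{t_j}(g-\ell_j)\diff W \mid \F^W_{t_{j-1}}\big] = 0$. Applying the discrete Burkholder--Davis--Gundy inequality (Theorem~\ref{th:bdg}) and the triangle inequality in $L^{p/2}$ exactly as in the proof of Lemma~\ref{lem:main2} gives
\begin{align*}
  \big\| I[g]-Q^{\mathrm{Trap}}_N[g]\big\|_{L^p(\Omega)}
  \le C_p\Big(\sum_{j=1}^N \|R_j\|_{L^p(\Omega)}^2\Big)^{\frac12},
\end{align*}
and then Theorem~\ref{th:hoeld} bounds each term by $\|R_j\|_{L^p(\Omega)}^p \le \big(\tfrac{p(p-1)}{2}\big)^{p/2} h^{(p-2)/2}\int_{t_{j-1}}^{t_j}|g(t)-\ell_j(t)|^p\diff t$. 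Pulling out the factor $h^{(p-2)/(2p)}$ from each term and recombining, as in Theorem~\ref{th:main2}, leaves $\big(\sum_j \|R_j\|^2_{L^p}\big)^{1/2} \le \big(\tfrac{p(p-1)}{2}\big)^{1/2} N^{(p-2)/(2p)} h^{(p-2)/(2p)}\big(\sum_j \int_{t_{j-1}}^{t_j}|g(t)-\ell_j(t)|^p\diff t\big)^{1/p}$, and $N^{(p-2)/(2p)}h^{(p-2)/(2p)} = T^{(p-2)/(2p)}$.

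**The local approximation estimate — the main obstacle.**
The crux is therefore the deterministic inequality
\begin{align*}
  \sum_{j=1}^N \int_{t_{j-1}}^{t_j} |g(t)-\ell_j(t)|^p \diff{t}
  \le C\, h^{p(1+\sigma)}\, \|g\|_{W^{1+\sigma,p}(0,T)}^p,
\end{align*}
with a constant absorbing only the factor $2$ in the stated bound. Here $\ell_j$ is, morally, a first-order Taylor-type or interpolation polynomial, so $g - \ell_j$ should be controlled by the fractional modulus of smoothness of $\dot g$. The plan is, on each subinterval, to write $g(t) - \ell_j(t)$ using the fundamental theorem of calculus twice: $g(t) - g(t_{j-\frac12}) - (t-t_{j-\frac12})\dot g(t_{j-\frac12}) = \int_{t_{j-\frac12}}^{t}(\dot g(s) - \dot g(t_{j-\frac12}))\diff s$, then reconcile the quadrature coefficients $a_j,b_j$ with $g(t_{j-\frac12})$ and $\dot g(t_{j-\frac12})$, again via integral remainders involving $\dot g(s) - \dot g(\cdot)$; the symmetry of $\{\theta_j,\hat\theta_j\}$ about $t_{j-\frac12}$ is what kills the leading-order term and forces the remainder to see only differences of $\dot g$. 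Estimating $|g(t)-\ell_j(t)|^p$ by (a constant times) $h^{p-1}\int_{t_{j-1}}^{t_j}|\dot g(s)-\dot g(u)|^p$-type double integrals over the subinterval, bounding $|s-u|\le h$, dividing and multiplying by $|s-u|^{1+\sigma p}$ to insert the Sobolev--Slobodeckij kernel, and summing over $j$ then yields the factor $h^{p-1}\cdot h^{1}\cdot h^{1+\sigma p} = h^{p(1+\sigma)}$ against $\iint |\dot g(s)-\dot g(u)|^p/|s-u|^{1+\sigma p}\diff s\diff u \le \|g\|_{W^{1+\sigma,p}(0,T)}^p$. The delicate bookkeeping is tracking all the constants so that the final numerical factor is exactly $C_p(2p(p-1))^{1/2}$; I expect the "$2$" to come from combining the two structurally identical remainder contributions (one from approximating $g(t_{j-\frac12})$ by $a_j$, one from the affine part), and I would organize the subinterval estimate as a short self-contained lemma to keep the constants transparent.
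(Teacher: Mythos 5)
Your proposal is correct and follows essentially the same route as the paper: the paper's Lemma~\ref{lem:error} is exactly your identification of the local error as $\int_{t_{j-1}}^{t_j}(g(t)-\ell_j(t))\diff{W(t)}$ with the symmetry of $\theta_j,\hat\theta_j$ about $t_{j-\frac{1}{2}}$ reducing everything to differences of $\dot g$, after which the discrete BDG inequality, Theorem~\ref{th:hoeld}, H\"older, and insertion of the Slobodeckij kernel proceed just as you describe (and the factor $2$ indeed arises as $1+\frac12+\frac12$ from the three remainder terms). The only blemish is the bookkeeping line ``$h^{p-1}\cdot h^{1}\cdot h^{1+\sigma p}=h^{p(1+\sigma)}$'', which has one spurious factor of $h$ ($h^{p-1}$ already accounts for the $t$-integration); the correct product $h^{p-1}\cdot h^{1+\sigma p}$ gives the stated $h^{p(1+\sigma)}$.
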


The proof of Theorem~\ref{th:main3} relies on the following lemma, which
contains a useful representation of the error of the quadrature formula
\eqref{eq:Trap}.

\begin{lemma} 
  \label{lem:error}
  Let Assumption~\ref{as:g} be satisfied with $p \in [2, \infty)$, $\sigma \in
  (0,1)$. Then, for every $N \in \N$ the discrete time error process $(E^n)_{n
  \in \{0,\ldots,N\}}$ of the quadrature rule \eqref{eq:Trap} defined by $E^0
  := 0$ and  
  \begin{align*}
    E^n 
		&= \sum_{j=1}^{n} 
		\int_{t_{j-1}}^{t_j} \Big( g(t) - \frac{1}{2} (g(\theta_j) + g(\hat{\theta}_j))
	  - \frac{1}{h} (g(t_j) - g(t_{j-1}) ) (t - t_{j -\frac{1}{2}}) \Big)
    \diff{W(t)} 
  \end{align*}
  for  $n \in \{1,\ldots,N\}$, is a discrete time $(\F_{t_n})_{n \in
  \{0,\ldots,N\}}$-adapted $L^p(\Omega_W)$-martingale. 
  Moreover, it holds true that 
  \begin{align}
    \label{eq4:rep_error}
    \begin{split}
		  E^n 
		  &= \frac{1}{h} \sum_{j=1}^{n} \int_{t_{j-1}}^{t_j} \int_{t_{j-1}}^{t_j}
      \Big( \int_{t_{j -\frac{1}{2}}}^{t} 	\big( \dot{g}(s) - \dot{g}(r) \big)
      \diff{s}\\ 
		  &\qquad -\frac{1}{2}  
		  \int_{t_{j -\frac{1}{2}}}^{\theta_j} \big( \dot{g}(s) - \dot{g}(r) \big)
      \diff{s} -\frac{1}{2} 
		  \int_{t_{j -\frac{1}{2}}}^{\hat{\theta}_j} \big( \dot{g}(s) - \dot{g}(r)
      \big) \diff{s} \Big) 
		  \diff{r} \diff{W(t)}	
    \end{split}
	\end{align}
  for all $n \in \{1,\dots,N \}$.
\end{lemma}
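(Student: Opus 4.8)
My plan is to work one subinterval $[t_{j-1},t_j]$ at a time and rewrite the integrand of the $j$-th stochastic integral defining $E^n$ as the double average appearing in \eqref{eq4:rep_error}. The key identity is that, because $g$ has a weak derivative $\dot g$ (which exists and has an absolutely continuous representative by Assumption~\ref{as:g} and the Sobolev embedding), one may write $g(b) - g(a) = \int_a^b \dot g(s)\diff{s}$ for any $a,b$ in the subinterval. First I would fix $j$ and $t\in[t_{j-1},t_j]$ and express each of the four terms of the integrand
\[
  g(t) - \tfrac12\big(g(\theta_j) + g(\hat\theta_j)\big) - \tfrac1h\big(g(t_j)-g(t_{j-1})\big)(t - t_{j-\frac12})
\]
as an integral against $\dot g$ starting from the common base point $t_{j-\frac12}$: namely $g(t) - g(t_{j-\frac12}) = \int_{t_{j-\frac12}}^t \dot g(s)\diff s$, similarly for $g(\theta_j)$ and $g(\hat\theta_j)$, and $g(t_j)-g(t_{j-1}) = \int_{t_{j-1}}^{t_j}\dot g(s)\diff s = \tfrac1h\int_{t_{j-1}}^{t_j}\int_{t_{j-1}}^{t_j}\dot g(s)\diff s\diff r$ so that the linear-in-$t$ term also becomes a double integral over $[t_{j-1},t_j]^2$. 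The constant $-g(t_{j-\frac12})$ coming from the first term and the $+\tfrac12 g(t_{j-\frac12}) + \tfrac12 g(t_{j-\frac12})$ coming from the average cancel, which is why the base point $t_{j-\frac12}$ is the right choice.

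Next I would introduce the auxiliary averaging variable $r$: since $\tfrac1h\int_{t_{j-1}}^{t_j}\diff r = 1$, each of the three remaining single integrals $\int_{t_{j-\frac12}}^{t}\dot g(s)\diff s$, $\int_{t_{j-\frac12}}^{\theta_j}\dot g(s)\diff s$, $\int_{t_{j-\frac12}}^{\hat\theta_j}\dot g(s)\diff s$ may be multiplied by $\tfrac1h\int_{t_{j-1}}^{t_j}\diff r$ without changing their value. For the linear term, $\tfrac1h(t-t_{j-\frac12})\int_{t_{j-1}}^{t_j}\dot g(s)\diff s$, I would use the elementary identity $\tfrac1h\int_{t_{j-1}}^{t_j}(\dot g(r) \text{ weighted}) $... more precisely $t - t_{j-\frac12} = \tfrac1h\int_{t_{j-1}}^{t_j}\big(\int_{t_{j-\frac12}}^{t}\diff s\big)\diff r$ is not quite it; rather one writes $\tfrac1h(g(t_j)-g(t_{j-1}))(t-t_{j-\frac12}) = \tfrac1h\int_{t_{j-1}}^{t_j}\int_{t_{j-\frac12}}^{t}\dot g(r)\diff s\diff r$ after recognizing $\int_{t_{j-\frac12}}^t\diff s = t - t_{j-\frac12}$ and relabelling the integration variable in $g(t_j)-g(t_{j-1})=\int_{t_{j-1}}^{t_j}\dot g(r)\diff r$ to $r$. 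Combining: the first term contributes $\tfrac1h\int_{t_{j-1}}^{t_j}\int_{t_{j-\frac12}}^{t}\dot g(s)\diff s\diff r$, the linear term subtracts $\tfrac1h\int_{t_{j-1}}^{t_j}\int_{t_{j-\frac12}}^{t}\dot g(r)\diff s\diff r$, and the two midpoint-average terms subtract $\tfrac1{2h}\int_{t_{j-1}}^{t_j}\int_{t_{j-\frac12}}^{\theta_j}(\dot g(s)-\dot g(r))\diff s\diff r$ and the analogous term with $\hat\theta_j$ (here I again use the trivial $\tfrac1h\int_{t_{j-1}}^{t_j}\diff r =1$, but now insert $\dot g(r)$ freely since $\int_{t_{j-\frac12}}^{\theta_j}\dot g(r)\diff s$ integrates in $s$ over an interval of the same length for all $r$, so its $r$-average must be subtracted consistently). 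Collecting the four contributions under the single double integral $\tfrac1h\int_{t_{j-1}}^{t_j}\int_{t_{j-1}}^{t_j}(\cdots)\diff r\,(\text{against }\diff W(t))$ yields exactly \eqref{eq4:rep_error}; summing over $j$ from $1$ to $n$ gives the claimed formula.

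Finally, for the martingale and integrability assertions I would argue as in the proof of Lemma~\ref{lem:main2}: each summand of $E^n$ is a stochastic integral of a deterministic (hence $(\F_t)$-adapted) $L^p(0,T)$ integrand over $[t_{j-1},t_j]$, so by Theorem~\ref{th:hoeld} it lies in $L^p(\Omega_W)$ — here one uses that $g$, $\dot g \in L^p(0,T)$ and that the integrand in the $j$-th term is bounded in $L^p$ by a constant times $\|g\|_{W^{1,p}(t_{j-1},t_j)}$ — and $E^n$ is $\F_{t_n}$-measurable. Since $\E_W[\,\cdot\mid\F_{t_m}]$ annihilates the stochastic integrals over $[t_{j-1},t_j]$ with $j > m$ (their integrands are deterministic, hence the conditional expectation of the increment vanishes), the tower property gives $\E_W[E^n\mid\F_{t_m}] = E^m$ for $m\le n$, so $(E^n)_{n\in\{0,\dots,N\}}$ is an $(\F_{t_n})$-adapted $L^p(\Omega_W)$-martingale. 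The main obstacle is purely bookkeeping: getting every sign, every factor of $\tfrac12$, and every base point right in the rewriting step, and in particular making sure the constant terms involving $g(t_{j-\frac12})$ cancel and that the linear term is correctly redistributed as a double integral against $\dot g(r)$ so that it pairs with the $\int_{t_{j-\frac12}}^t\dot g(s)\diff s$ term to form the difference $\dot g(s) - \dot g(r)$; there is no analytic difficulty beyond applying the fundamental theorem of calculus for Sobolev functions.
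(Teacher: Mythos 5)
Your proposal is correct and follows essentially the same route as the paper's own proof: express every term of the integrand via the fundamental theorem of calculus for Sobolev functions with the common base point $t_{j-\frac{1}{2}}$ (so that the constants $g(t_{j-\frac{1}{2}})$ cancel), insert the averaging factor $\frac{1}{h}\int_{t_{j-1}}^{t_j}\diff{r}$, rewrite $g(t_j)-g(t_{j-1})=\int_{t_{j-1}}^{t_j}\dot{g}(r)\diff{r}$ and $t-t_{j-\frac{1}{2}}=\int_{t_{j-\frac{1}{2}}}^{t}\diff{s}$ so that the linear term pairs with $\int_{t_{j-\frac{1}{2}}}^{t}\dot{g}(s)\diff{s}$ to form $\dot{g}(s)-\dot{g}(r)$, and handle the martingale/integrability claims exactly as you describe. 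The one spot where you are vaguer than the paper is the ``free'' insertion of $-\dot{g}(r)$ into the two midpoint-average terms: this is not free term by term, but the two spurious contributions $\frac{\theta_j-t_{j-\frac{1}{2}}}{h}\int_{t_{j-1}}^{t_j}\dot{g}(r)\diff{r}$ and $\frac{\hat{\theta}_j-t_{j-\frac{1}{2}}}{h}\int_{t_{j-1}}^{t_j}\dot{g}(r)\diff{r}$ cancel in the sum because of the identity $\theta_j-t_{j-\frac{1}{2}}=-(\hat{\theta}_j-t_{j-\frac{1}{2}})$, which is exactly the justification the paper makes explicit.
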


\begin{proof}
  The martingale property and the $L^p(\Omega_W)$-integrability follow directly
  from the definition of $E^n$ and the fact that $g \in W^{1+\sigma,p}(0,T)$
  implies the boundedness of $g$. In order to prove \eqref{eq4:rep_error} 
  let us rewrite $g(\theta_j) + g(\hat{\theta}_j)$ in a suitable way by
	\begin{align*}
		g(\theta_j) + g(\hat{\theta}_j)
		= 2 g(t_{j -\frac{1}{2}}) + \int_{t_{j -\frac{1}{2}}}^{\theta_j} \dot{g}(s)
    \diff{s}  
		+ \int_{t_{j -\frac{1}{2}}}^{\hat{\theta}_j} \dot{g}(s) \diff{s},
	\end{align*}
  where $\dot{g}$ denotes the weak derivative of $g \in W^{1+\sigma,p}(0,T)$. 
  Therefore, we have for all $t \in [t_{j-1},t_j]$ that
  \begin{align*}
    g(t) - \frac{1}{2} (g(\theta_j) + g(\hat{\theta}_j))
    &= g(t) - g(t_{j -\frac{1}{2}}) - \frac{1}{2} \int_{t_{j
    -\frac{1}{2}}}^{\theta_j} \dot{g}(s) \diff{s}  
    - \frac{1}{2} \int_{t_{j -\frac{1}{2}}}^{\hat{\theta}_j} \dot{g}(s) \diff{s}.
  \end{align*}
  Inserting this into the definition of $E^n$ then yields the three terms
  \begin{align*}
    E^n = \sum_{j = 1}^n \big( X_a^j - \frac{1}{2} X_b^j - X_c^j \big),
  \end{align*}
  where
  \begin{align*}
    X_a^j &= \int_{t_{j-1}}^{t_j} \big( g(t) - g(t_{j -\frac{1}{2}}) \big)
    \diff{W(t)},\\ 
    X_b^j &= \int_{t_{j-1}}^{t_j} \Big( \int_{t_{j -\frac{1}{2}}}^{\theta_j} \dot{g}(s) 
    \diff{s} 
    + \int_{t_{j -\frac{1}{2}}}^{\hat{\theta}_j} \dot{g}(s) \diff{s}
    \Big) \diff{W(t)},\\
    X_c^j &= \frac{1}{h} (g(t_j) - g(t_{j-1}) ) \int_{t_{j-1}}^{t_j} (t - t_{j
    -\frac{1}{2}}) \diff{W(t)}. 
  \end{align*}
  In the following let $j \in \{1, \dots, n \}$ be arbitrary. 
  For the term $X_{c}^j$ we then obtain 
	\begin{align*}
		X_c^j 
		&= \frac{1}{h} (g(t_j) - g(t_{j-1}) ) 
    \int_{t_{j-1}}^{t_j} (t - t_{j -\frac{1}{2}}) \diff{W(t)} \\
		&= \frac{1}{h} \int_{t_{j-1}}^{t_j} \dot{g}(r) \diff{r}
		\int_{t_{j-1}}^{t_j} \int^{t}_{t_{j -\frac{1}{2}}} \diff{s} \diff{W(t)} \\
		&= \frac{1}{h} \int_{t_{j-1}}^{t_j} \int^{t}_{t_{j -\frac{1}{2}}}
    \int_{t_{j-1}}^{t_j}  \dot{g}(r) \diff{r} \diff{s} \diff{W(t)} . 
	\end{align*}
	This now enables us to write
	\begin{align*}
		X_{a}^j - X_{c}^j
		& = \int_{t_{j-1}}^{t_j} \int_{t_{j -\frac{1}{2}}}^{t} \dot{g}(s) \diff{s}
    \diff{W(t)} 
		- \frac{1}{h} \int_{t_{j-1}}^{t_j} \int^{t}_{t_{j -\frac{1}{2}}}
    \int_{t_{j-1}}^{t_j}  
		\dot{g}(r) \diff{r} \diff{s} \diff{W(t)} \\
		& = \frac{1}{h} \int_{t_{j-1}}^{t_j} \int_{t_{j-1}}^{t_j} \int_{t_{j
    -\frac{1}{2}}}^{t}  
		\big( \dot{g}(s) - \dot{g}(r) \big) \diff{s} \diff{r} \diff{W(t)}.
	\end{align*}
	Further, due to the identity
	$\theta_j - t_{j -\frac{1}{2}} = - (\hat{\theta}_j - t_{j -\frac{1}{2}})$
  we have for the term $X_b^j$ that
	\begin{align*}
		X_b^j
    &= \int_{t_{j-1}}^{t_j} \int_{t_{j -\frac{1}{2}}}^{\theta_j} \dot{g}(s)
    \diff{s} \diff{W(t)}
		+ \int_{t_{j-1}}^{t_j} \int_{t_{j -\frac{1}{2}}}^{\hat{\theta}_j} \dot{g}(s)
    \diff{s} \diff{W(t)}\\
		&= \frac{1}{h}\int_{t_{j-1}}^{t_j} \int_{t_{j-1}}^{t_j} \int_{t_{j
    -\frac{1}{2}}}^{\theta_j}  \dot{g}(s) \diff{s} \diff{r}	\diff{W(t)}
		+ \frac{1}{h}\int_{t_{j-1}}^{t_j} \int_{t_{j-1}}^{t_j} 
		\int_{t_{j -\frac{1}{2}}}^{\hat{\theta}_j} 
		\dot{g}(s) \diff{s} \diff{r} \diff{W(t)}\\
		&\quad - \frac{\theta_j - t_{j -\frac{1}{2}}}{h} \int_{t_{j-1}}^{t_j}
    \int_{t_{j-1}}^{t_j}  \dot{g}(r) \diff{r} \diff{W(t)}
    - \frac{\hat{\theta}_j - t_{j -\frac{1}{2}}}{h} 
	  \int_{t_{j-1}}^{t_j} \int_{t_{j-1}}^{t_j}  \dot{g}(r) \diff{r}
    \diff{W(t)}\\ 
		&= \frac{1}{h} \int_{t_{j-1}}^{t_j} \int_{t_{j-1}}^{t_j} 
    \Big( \int_{t_{j -\frac{1}{2}}}^{\theta_j} \big( \dot{g}(s) - \dot{g}(r) \big)
    \diff{s} + 
    \int_{t_{j -\frac{1}{2}}}^{\hat{\theta}_j} \big( \dot{g}(s) - \dot{g}(r) \big)
    \diff{s} \Big) \diff{r} \diff{W(t)}.
	\end{align*}
	Altogether, this completes the proof of \eqref{eq4:rep_error}.
  \end{proof}

This lemma in mind, we now present our proof of the main result of this 
section.

\begin{proof}[Proof of Theorem~\ref{th:main3}]
  Let $N \in \N$ be arbitrary. Due to Lemma~\ref{lem:error} we know that the
  discrete time error process $(E^n)_{n \in \{0,\ldots,N\}}$ is a $p$-fold
  integrable martingale with respect to the filtration $(\F^W_{t_n})_{n \in
  \{0,\ldots,N\}}$. Thus, an application of Theorem~\ref{th:bdg} yields
  \begin{align*}
    \big\| \max_{n \in \{0,\ldots,N\}} | E^n| \big \|_{L^p(\Omega_W)}
    &\le C_p \Big\| \Big( \big | E^0 \big|^2 
		+  \sum_{j=0}^{N-1} \big| E^{j+1} - E^j \big|^2  
		\Big)^{\frac{1}{2}} \Big \|_{L^p(\Omega_W)}.
  \end{align*}
  After inserting $E^0 = 0$ and the representation \eqref{eq4:rep_error} we
  obtain by an application of the triangle inequality 
  \begin{align} 
    \label{4eq:errorSplit}
    \begin{split}
      &\big\| \max_{n \in \{1,\ldots,N\}} | E^n| \big \|_{L^p(\Omega_W)}\\
      &\quad \le C_p \frac{1}{h} \Big\| \Big( \sum_{j=1}^{N} \Big| 
      \int_{t_{j-1}}^{t_j} \int_{t_{j-1}}^{t_j} 
	    \int_{t_{j -\frac{1}{2}}}^{t} \big( \dot{g}(s) - \dot{g}(r) \big)
      \diff{s} \diff{r} \diff{W(t)}
      \Big|^2 \Big)^{\frac{1}{2}} \Big \|_{L^p(\Omega_W)} \\
      &\qquad + C_p \frac{1}{2h} \Big\| \Big( \sum_{j=1}^{N} \Big| 
      \int_{t_{j-1}}^{t_j} \int_{t_{j-1}}^{t_j} 
      \int_{t_{j -\frac{1}{2}}}^{\theta_j} \big( \dot{g}(s) - \dot{g}(r) \big)
      \diff{s} \diff{r} \diff{W(t)}
      \Big|^2 \Big)^{\frac{1}{2}} \Big \|_{L^p(\Omega_W)} \\
      &\qquad + C_p \frac{1}{2h} \Big\| \Big( \sum_{j=1}^{N} \Big| 
      \int_{t_{j-1}}^{t_j} \int_{t_{j-1}}^{t_j} 
      \int_{t_{j -\frac{1}{2}}}^{\hat{\theta}_j} \big( \dot{g}(s) - \dot{g}(r)
      \big) \diff{s} \diff{r} \diff{W(t)}
      \Big|^2 \Big)^{\frac{1}{2}} \Big \|_{L^p(\Omega_W)}.
    \end{split}
  \end{align}
  All three terms on the right hand side of \eqref{4eq:errorSplit} can be
  estimated by the same arguments. We only give details for the first term:
  First note that
  \begin{align*}
    &C_p \frac{1}{h} \Big\| \Big( \sum_{j=1}^{N} \Big| 
    \int_{t_{j-1}}^{t_j} \int_{t_{j-1}}^{t_j} 
	  \int_{t_{j -\frac{1}{2}}}^{t} \big( \dot{g}(s) - \dot{g}(r) \big) \diff{s}
    \diff{r} \diff{W(t)}
    \Big|^2 \Big)^{\frac{1}{2}} \Big \|_{L^p(\Omega_W)}\\
    &\quad = C_p \frac{1}{h}
    \Big( \Big\| \sum_{j=1}^{N} \Big| 
    \int_{t_{j-1}}^{t_j} \int_{t_{j-1}}^{t_j} 
	  \int_{t_{j -\frac{1}{2}}}^{t} 	\big( \dot{g}(s) - \dot{g}(r) \big)
    \diff{s} \diff{r} \diff{W(t)}
    \Big|^2 \Big \|_{L^{\frac{p}{2}}(\Omega_W)}\Big)^{\frac{1}{2}}\\
    &\quad \le C_p \frac{1}{h} \Big( \sum_{j=1}^{N} \Big\| 
    \int_{t_{j-1}}^{t_j} \int_{t_{j-1}}^{t_j} 
	  \int_{t_{j -\frac{1}{2}}}^{t} 	\big( \dot{g}(s) - \dot{g}(r) \big)
    \diff{s} \diff{r} \diff{W(t)}
    \Big\|_{L^{p}(\Omega_W)}^2 \Big)^{\frac{1}{2}}.
  \end{align*}
  Next, we apply Theorem~\ref{th:hoeld} to each summand and obtain
  \begin{align*}
    &\Big( \sum_{j=1}^{N} \Big\| 
    \int_{t_{j-1}}^{t_j} \int_{t_{j-1}}^{t_j} 
	  \int_{t_{j -\frac{1}{2}}}^{t} 	\big( \dot{g}(s) - \dot{g}(r) \big) \diff{s}
    \diff{r} \diff{W(t)}
    \Big\|_{L^{p}(\Omega_W)}^2 \Big)^{\frac{1}{2}}\\
    &\quad \le \Big( \frac{p(p-1)}{2} \Big)^{\frac{1}{2}} h^{\frac{p-2}{2p}} 
    \Big( \sum_{j=1}^{N} \Big( \int_{t_{j-1}}^{t_j} \Big| 
    \int_{t_{j-1}}^{t_j} 
	  \int_{t_{j -\frac{1}{2}}}^{t} \big( \dot{g}(s) - \dot{g}(r) \big) \diff{s}
    \diff{r} \Big|^p \diff{t} \Big)^{\frac{2}{p}} \Big)^{\frac{1}{2}}\\
    &\quad \le \Big( \frac{p(p-1)}{2} \Big)^{\frac{1}{2}} h^{\frac{p-2}{2p}} 
    N^{\frac{p-2}{2p} } 
    \Big( \sum_{j=1}^{N} \int_{t_{j-1}}^{t_j} \Big| \int_{t_{j-1}}^{t_j} 
	  \int_{t_{j -\frac{1}{2}}}^{t} \big( \dot{g}(s) - \dot{g}(r) \big) \diff{s}
    \diff{r} \Big|^p \diff{t}  \Big)^{\frac{1}{p}}\\
    &\quad \le \Big( \frac{p(p-1)}{2} \Big)^{\frac{1}{2}} T^{\frac{p-2}{2p}} 
    \Big( \sum_{j=1}^{N} h^{2(p-1)} 
    \int_{t_{j-1}}^{t_j} \int_{t_{j-1}}^{t_j} 
	  \int_{t_{j -\frac{1}{2}}}^{t} \big| \dot{g}(s) - \dot{g}(r) \big|^p \diff{s}
    \diff{r} \diff{t} \Big)^{\frac{1}{p}}\\
    &\quad \le \Big( \frac{p(p-1)}{2} \Big)^{\frac{1}{2}} T^{\frac{p-2}{2p}} 
    \Big( \sum_{j=1}^{N} h^{2p+p\sigma} 
    \int_{t_{j-1}}^{t_j} \int_{t_{j-1}}^{t_j} 
    \frac{\big| \dot{g}(s) - \dot{g}(r) \big|^p}{|s - r|^{p \sigma + 1}}
	  \diff{s} \diff{r} \Big)^{\frac{1}{p}}\\
    &\quad \le \Big( \frac{p(p-1)}{2} \Big)^{\frac{1}{2}} T^{\frac{p-2}{2p} }
    h^{2 + \sigma} \| g \|_{W^{1+\sigma,p}(0,T)}, 
  \end{align*}
  where we also applied H\"older's inequality several times.
  Thus, together with the factor $C_p \frac{1}{h}$ we arrive at
  \begin{align*}
    &C_p \frac{1}{h} \Big\| \Big( \sum_{j=1}^{N} \Big| 
    \int_{t_{j-1}}^{t_j} \int_{t_{j-1}}^{t_j} 
	  \int_{t_{j -\frac{1}{2}}}^{t} 	\big( \dot{g}(s) - \dot{g}(r) \big) \diff{s}
    \diff{r} \diff{W(t)}
    \Big|^2 \Big)^{\frac{1}{2}} \Big \|_{L^p(\Omega_W)}\\
    &\quad \le C_p \Big( \frac{p(p-1)}{2} \Big)^{\frac{1}{2}}
    T^{\frac{p-2}{2p}} h^{1 + \sigma} \| g \|_{W^{1+\sigma,p}(0,T)}.
  \end{align*}    
  Up to an additional factor $\frac{1}{2}$ the same estimate is valid for the
  other two terms in \eqref{4eq:errorSplit}. This completes the proof.
\end{proof}

\begin{remark}\label{remark:randVars}
  Note that for the implementation of the quadrature rule \eqref{eq:Trap} we
  have to simulate the stochastic integral
  \begin{align*}
  	\int_{t_{j-1}}^{t_j} (t - t_{j -\frac{1}{2}}) \diff{W(t)}
  \end{align*}
  in addition to the standard increments $W(t_j) - W(t_{j-1})$.
  This can easily be accomplished by taking note of
  \begin{align*}
    \E_W \Big[ (W(t_j)- W(t_{j-1}))  \int_{t_{j-1}}^{t_j} (t - t_{j
    -\frac{1}{2}}) \diff{W(t)} \Big]
    &= \int_{t_{j-1}}^{t_j} (t - t_{j -\frac{1}{2}}) \diff{t} = 0,
  \end{align*}
  that is, the two random variables are uncorrelated. 
  Since they are jointly normally distributed, they are also mutually 
  independent. Therefore, we can simulate the two increments in practice
  by generating $(Z_1, Z_2) \sim \mathcal{N}(0, I_2)$ and then setting
  \begin{align*}
    \left(  
    \begin{matrix}
      W(t_j)- W(t_{j-1})\\
      \int_{t_{j-1}}^{t_j} (t - t_{j -\frac{1}{2}}) \diff{W(t)}
    \end{matrix}
    \right)
    \sim
    \left(
    \begin{matrix}
      h^{\frac{1}{2}} & 0 \\
      0 & \frac{1}{2 \sqrt{3}} h^{\frac{3}{2}}
    \end{matrix}
    \right)
    \left(
    \begin{matrix}
      Z_1\\
      Z_2
    \end{matrix}
    \right),
  \end{align*}
  hereby we make use of the fact that 
  \begin{align*}
    \E_W\Big[ \Big| \int_{t_{j-1}}^{t_j} (t - t_{j -\frac{1}{2}}) \diff{W(t)}
    \Big|^2 \Big] 
    = \int_{t_{j-1}}^{t_j} (t - t_{j -\frac{1}{2}})^2 \diff{t} = \frac{1}{12}
    h^3. 
  \end{align*}
\end{remark}


\section{Numerical examples with some deterministic integrands}
\label{sec:num}
 
In this section we perform numerically the quadrature of the It\^{o}-integral
\eqref{eq:stochint} with three deterministic integrands $g_i \colon [0,T]
\to \R$, $i \in \{1,2,3\}$. Hereby, the first integrand $g_1$ is smooth but 
oscillating, while the second is discontinuous with a jump. The third 
integrand is not smooth in the sense that either 
itself or its derivative contains a weak singularity at $t=0$.
We perform a series of numerical experiments which
verify the theoretical results of both quadrature formulas
\eqref{eq:Q} and \eqref{eq:Trap}.

For the implementation of the numerical examples, we follow a similar approach 
as already mentioned in Remark~\ref{remark:randVars}. In order to approximate
the error we simultaneously generate the exact value of the
It\^{o}-integral and the Wiener increments required for the quadrature rules. 
For this we generate a random 
vector $(Z_1,Z_2,Z_3) \sim \mathcal{N}(0,I_3)$ and define
\begin{align}
  \label{eq:simX}
  \left(  
    \begin{matrix}
      X_1\\
      X_2\\
      X_3
    \end{matrix}
  \right)
  :=  	
	\left(  
    \begin{matrix}
      \int_{t_{j-1}}^{t_j} \diff{W(t)}\\
      \int_{t_{j-1}}^{t_j} (t - t_{j -\frac{1}{2}}) \diff{W(t)}\\
      \int_{t_{j-1}}^{t_j} g(t) \diff{W(t)}
    \end{matrix}
    \right)
    \sim
    G
    \left(
    \begin{matrix}
      Z_1\\
      Z_2\\
      Z_3
    \end{matrix}
    \right),
\end{align}
where $t_{j -\frac{1}{2}} = \frac{1}{2}(t_{j-1} + t_j)$ and the matrix $G$ is
the Cholesky decomposition of the covariance matrix $Q \in \R^{3,3}$ given by
\begin{align*}
	Q = \big( \E_W\big[X_n X_m \big] \big)_{n,m \in \{1,2,3\}}.
\end{align*}
Similar to Remark~\ref{remark:randVars} the upper left part of $Q$ takes on the
values
\begin{align*}
	\E_W\big[ X_1^2 \big] = h , \quad
	\E_W\big[ X_2^2 \big] = \frac{h^3}{12},
  \quad \text{and} \quad
	\E_W\big[X_1 X_2 \big] = 0	.
\end{align*}
The newly appearing terms in the third column and row of $Q$ are given by
\begin{align*}
	&\E_W\big[X_3^2 \big] = \int_{t_{j-1}}^{t_j} g^2(t) \diff{t},\quad
	\E_W\big[X_1 X_3 \big] = \int_{t_{j-1}}^{t_j} g(t) \diff{t},
	\quad \text{and} \quad\\
	&\E_W\big[X_2 X_3 \big] = \int_{t_{j-1}}^{t_j} t g(t) \diff{t}
  - t_{j -\frac{1}{2}} \int_{t_{j-1}}^{t_j} g(t) \diff{t}.
\end{align*}
The random variables are then used to compute the exact value of the
It\^{o}-integral as well as the stochastic integral in the higher order 
quadrature formula \eqref{eq:Trap}.
In the same way, we simulate the increments and the exact solution for the
randomly shifted Riemann--Maruyama rule \eqref{eq:Q}, where we do not need to
simulate $X_2$ and 
we have to replace the grid points $\pi_h=(t_j)_{j \in \{0,\ldots,N\}}$ by
those in $\pi_h(\Theta)$ for each realization of the random shift $\Theta \sim
\mathcal{U}(0,1)$ as defined in \eqref{eq:grid2}. For a more detailed
introduction and explanation of this procedure, see, for example,  
\cite[Section~2.3.3]{Glasserman2004}.

In our example we first choose the function $g_1 \colon [0,T] \to \R$ with 
$g_1(t) 
= \sin(\lambda t)$ for a constant value $\lambda \in \R$. For this choice of 
integrand the appearing integrals in the covariance matrix $Q$ can be 
stated explicitly and are given by 
\begin{align*}
  \int_{t_{j-1}}^{t_j} g_1(t) \diff{t} 
  &= \frac{1}{\lambda} \big( - \cos(\lambda t_j) +  \cos(\lambda t_{j-1})
  \big),\\ 
  \int_{t_{j-1}}^{t_j} t g_1(t) \diff{t} 
  &= \frac{1}{\lambda^2} \big( \sin(\lambda t_j) - \sin(\lambda t_{j-1}) \big)
  - \frac{1}{\lambda} \big( t_j \cos(\lambda t_j) - t_{j-1} \cos(\lambda 
  t_{j-1}) \big),
  \intertext{as well as}
  \int_{t_{j-1}}^{t_j} g_1^2(t) \diff{t} 
  &= \frac{h}{2} - \frac{1}{4 \lambda } \big(\sin(2 \lambda t_j) - \sin(2 
  \lambda t_{j-1}) \big).
\end{align*}
Using the fact that $|\sin(t)| \leq t$ holds true 
for all $t\in [0,\infty)$, we obtain for every $h_0 \in (0,T]$ and
$\sigma \in (0,1)$ that
\begin{align*}
  \int_0^{h} \sin^2(\lambda t)  \diff{t}
  \leq \int_0^{h} \lambda^2    t^{2} \diff{t} 
  = \frac{1}{3} \lambda^2 h^2 \quad \text{ for all } h \le h_0.
\end{align*}
Thus, it is easy to see that our choice of the 
integrand $g_1$ fulfills 
Assumption~\ref{as:G} and Assumption~\ref{as:g} for $p =2$ and every 
value $\sigma \in (0,1)$. Therefore,
our results from Theorem~\ref{th:main2} and Theorem~\ref{th:main3}
suggest that the quadrature rule \eqref{eq:Q} converges with a rate of $1$ 
whereas the quadrature rule \eqref{eq:Trap} converges with rate $2$.

Next, for $c \in (0,T)$ we consider the jump function 
\begin{align*}
  g_2 \colon [0,T] \to \R, \quad 
  g_2(t) = 
  \begin{cases}
    0, \quad \text{ if } t \in [0,c),\\
    1, \quad \text{ if } t \in [c,T].
  \end{cases}
\end{align*}
This type of function is considered in more detail in 
Section~\ref{sec:Poisson} coming. There, we prove in 
Lemma~\ref{lem:indicator} that this function is an element of 
$W^{\sigma,p}(0,T)$ for $\sigma p <1$. 
Therefore, Assumption~\ref{as:G} is fulfilled for $p \in [2,\infty)$ and every 
value $\sigma \in \big( 0,\frac{1}{p} \big)$ and
Theorem~\ref{th:main2} yields the convergence of \eqref{eq:Q} with a rate 
$\sigma$.
Note that this function is not even 
continuous, therefore one can not expect to prove any rate of convergence 
when measuring the regularity in an H\"older setting. The integrals 
appearing in the covariance matrix $Q$ can also be stated explicitly as
\begin{align*}
  \int_{t_{j-1}}^{t_j} g_2(t) \diff{t} 
  = \int_{t_{j-1}}^{t_j} g_2^2(t) \diff{t}
  &= 
  \begin{cases}
    0, \quad & \text{ if } t_{j}< c, \\
     t_j - c, \quad & \text{ if } c \in [t_{j-1}, t_j], \\
     t_j - t_{j-1}, \quad & \text{ if } t_{j-1} >c, 
  \end{cases}
  \intertext{and}
  \int_{t_{j-1}}^{t_j} t g_2(t) \diff{t} 
  &= 
  \begin{cases}
    0, \quad &\text{ if } t_{j}< c, \\
    \frac{1}{2} (t_j^2 - c^2), \quad & \text{ if } c \in [t_{j-1}, t_j], \\
    \frac{1}{2}(t_j^2 - t_{j-1}^2), \quad &\text{ if } t_{j-1}>c.
  \end{cases}
\end{align*}

As a third example we consider functions of the form $g_3 \colon [0,T] 
\to \R$
with $g_3(t) = t^{\gamma}$ for $\gamma \in (-\frac{1}{2},\frac{1}{2}] 
\setminus \{0 \}$.
For this choice of integrand the appearing integrals can again be stated
explicitly and are given by  
\begin{align*}
  \int_{t_{j-1}}^{t_j} g_3(t) \diff{t} 
  = \frac{1}{\gamma +1} \big( t_{j}^{\gamma +1} - t_{j-1}^{\gamma +1} \big),
  \quad 
  \int_{t_{j-1}}^{t_j} t g_3(t) \diff{t} 
  = \frac{1}{\gamma +2} \big( t_{j}^{\gamma +2} - t_{j-1}^{\gamma +2} 
  \big),
\end{align*}
as well as
\begin{align*}
  \int_{t_{j-1}}^{t_j} g_3^2(t) \diff{t} 
  = \frac{1}{2 \gamma +1} \big( t_{j}^{2\gamma +1} - t_{j-1}^{2\gamma +1} 
  \big).
\end{align*}
The regularity of the second integrand $g_3$ requires a little more attention 
and depends on the choice of $\gamma$. First, if $\gamma \in (0,\frac{1}{2}]$
the weak derivative of $g_3$ satisfies
$\dot{g_3} \in L^p(0,T)$ for $p< \frac{1}{1 - \gamma}$. Hence,
from Sobolev's embedding theorem, see, for example, \cite[Corollary 
18]{simon1990}, we get
\begin{align*}
  W^{1,p}(0,T) \hookrightarrow W^{\sigma,2}(0,T)
\end{align*}
for $1 - \frac{1}{p} = \sigma - \frac{1}{2}$. This implies $g_3 \in 
W^{\sigma,2}(0,T)$ for every $\sigma 
=  \frac{3}{2} - \frac{1}{p} < \frac{3}{2} - (1 - \gamma)  = \frac{1}{2} + 
\gamma$. Thus, in this case Assumption~\ref{as:G} is satisfied
with $p = 2$ and for all $\sigma \in (0,\frac{1}{2} + \gamma)$
including condition \eqref{eq:G_ini} for the initial value.
Assumption~\ref{as:g} is, however, not satisfied for any value 
$\gamma \in (0,\frac{1}{2}]$. 

Next, we turn to the case $\gamma \in (-\frac{1}{2},0)$, where we explicitly
estimate the Sobolev--Slobodeckij norm. For this 
let $s, t \in [0,T]$ with $s < t$ be arbitrary. 
Then, since $g_3$ is a decreasing, nonnegative function for $\gamma \in
(-\frac{1}{2},0)$ we have
\begin{align*}
  | g_3(t) - g_3(s)| = g_3(s) - g_3(t) \le g_3(s) = s^{\gamma}.
\end{align*}
Moreover, by the fundamental theorem of calculus it holds true that
\begin{align*}
    | g_3(t) - g_3(s)| &= \frac{1}{|\gamma|} \Big| 
    \int_0^1 \big( s + \rho (t-s) \big)^{-1 + \gamma} \diff{\rho} \Big| 
    |t - s|
    \le \frac{1}{|\gamma|} s^{-1 + \gamma} |t-s|.
\end{align*}
Inserting this into the Sobolev--Slobodeckij semi-norm yields for every $\mu
\in (0, \frac{1}{2} + \gamma)$ that
\begin{align*}
  \int_0^T \int_0^T \frac{ |g_3(t) - g_3(s)|^{2} }{|t-s|^{1 + 2 \sigma}}
  \diff{s} \diff{t}
  &= 2 \int_0^T \int_0^t |g_3(t) - g_3(s)|^{2(1- \mu)}
  \frac{ |g_3(t) - g_3(s)|^{2 \mu} }{|t-s|^{1 + 2 \sigma}} 
  \diff{s} \diff{t}\\
  &\le \frac{2}{|\gamma|^{2\mu}} \int_0^T \int_0^t s^{2(1-\mu)\gamma}
  s^{2\mu (-1 + \gamma)} |t-s|^{2\mu - 1 - 2 \sigma} \diff{s} \diff{t}\\
  &= \frac{2}{|\gamma|^{2\mu}} \int_0^T \int_0^t
  s^{2\gamma - 2 \mu } |t-s|^{2\mu - 1 - 2 \sigma} \diff{s} \diff{t}.
\end{align*}
The latter integral is finite for every $\sigma \in (0,\mu)$
due to $2 \gamma - 2 \mu > -1$ by our choice of $\mu \in (0, \frac{1}{2} +
\gamma)$. In sum, this proves that $g_3 \in W^{\sigma,2}(0,T)$ for all
$\sigma \in (0, \frac{1}{2} + \gamma)$. Since condition \eqref{eq:G_ini} is
also easily verified, it again follows that $g_3$ satisfies 
Assumption~\ref{as:G}
with $p= 2$ and for all $\sigma \in (0,\frac{1}{2} + \gamma)$ if $\gamma \in
(-\frac{1}{2},0)$. 
Therefore, we can apply Theorem~\ref{th:main2} and we obtain
that the quadrature rule \eqref{eq:Q} 
converges with a rate of $\gamma + \frac{1}{2}$ in both parameter ranges
$\gamma \in (0,\frac{1}{2})$ and $\gamma \in (-\frac{1}{2},0)$.

Since Assumption~\ref{as:g} is violated for all values of $\gamma$,
Theorem~\ref{th:main3} does not apply to $g_3$. Nevertheless, we still used the
quadrature rule \eqref{eq:Trap} in our numerical experiments 
in this case. 
Hereby, it should be mentioned that for $\gamma \in 
(-\frac{1}{2},0)$ the scheme \eqref{eq:Trap} is actually not well defined, 
since there appears an evaluation of the function $g_3$ at the point $t_0 = 
0$ at which $g_3$ possesses a singularity. 
In the numerical example we made use of the fact, that we knew in advance 
where the singularity is situated and left out this specific summand in the
quadrature rule. 

This problem illustrates well one advantage of a randomized
point evaluation. A quadrature formula based on a deterministic time grid might
not offer a useful approximation if a singularity of the integrand
happens to be at a grid point. On the other hand, an evaluation at a point of a
singularity will not occur almost surely if a randomized grid is used. 


\begin{figure}[t] 
	\centering
	\includegraphics[width=1\textwidth]{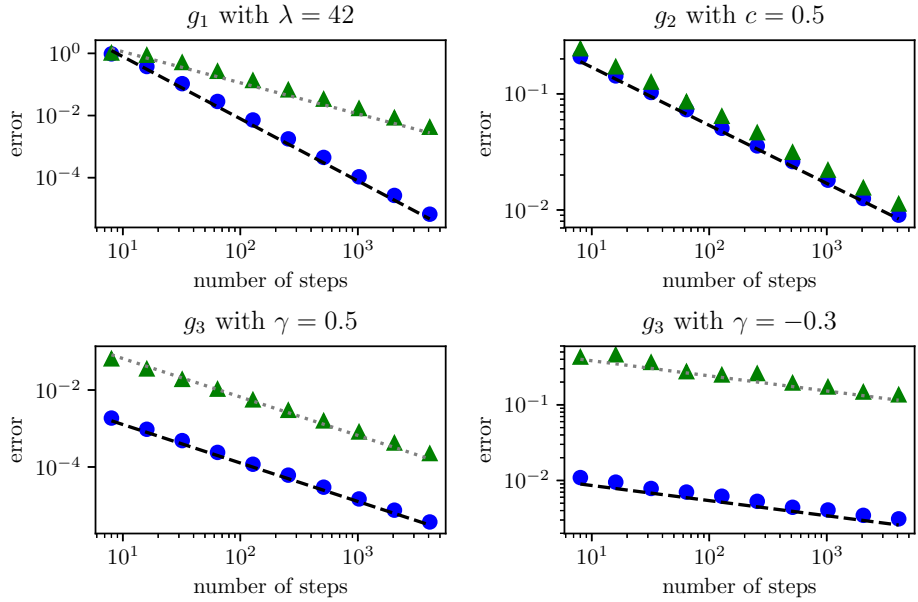}
	\caption{$L^2$-convergence of the lower order scheme \eqref{eq:Q} 
	(green triangles) and 
	the higher order scheme \eqref{eq:Trap} (blue circles) with $g_{1}$ with 
	$\lambda =42$,  
	$g_{2}$ with $c = 0.5$ as well as $g_{3}$ with both $\gamma = 0.5$ and 
	$\gamma = - 0.3$.
  For the function $g_{1}$ we inserted order lines 
  with slopes $1$ and $2$ as well as an order line of slope $0.5$ for $g_2$. In
  the second row we added two order lines with 
  slope $1$ into the left hand subfigure while 
  both order lines have a slope of 0.2 on the right hand side. 
  }  
	\label{fig1}
\end{figure}

\begin{table}[h]
\caption{Numerical example, for $g_2$ with $\gamma = -0.3$}
\label{tab:Numerical_err}
\begin{tabular}{p{1.0cm}|p{1.2cm}p{2.8cm}|p{1.2cm}p{2.8cm}}
  \noalign{\smallskip}\hline\noalign{\smallskip}
  $h$     & error of \eqref{eq:Q} & 95\% conf. interval  for \eqref{eq:Q}, & 
  error of \eqref{eq:Trap} & 95\% conf. interval  for \eqref{eq:Trap} \\ 
  \noalign{\smallskip}\hline\noalign{\smallskip}
  .1250 & .24767 & [.23849, .25652] & .20473 & [.19829, .21097]\\ 
  .0625 & .17613 & [.16952, .18249] & .14520 & [.14053, .14972]\\ 
  .0312 & .12149 & [.11686, .12596] & .10246 & [.09929, .10554]\\ 
  .0156 & .08925 & [.08605, .09234] & .07201 & [.06979, .07417]\\ 
  .0078 & .06480 & [.06226, .06725] & .05062 & [.04901, 
  .05219]\\ 
  .0039 & .04426 & [.04257, .04589] & .03601 & [.03489, 
  .03709]\\ 
  .0020 & .03129 & [.03006, .03248] & .02544 & [.02468, 
  .02618]\\ 
  .0010 & .02205 & [.02122, .02285] & .01809 & [.01750, .01866]\\ 
  .0005 & .01593 & [.01532, .01652] & .01300 & [.01259, .01339]\\ 
  .0002 & .01148 & [.01105, .01190] & .00902 & [.00873, .00930]\\ 
\end{tabular}
\end{table}                 

For the numerical experiment displayed in Figure~\ref{fig1} and Table 
\ref{tab:Numerical_err}, we chose the 
final time $T = 1$ and the parameter values $\lambda = 42$ for $g_1$, $c = 
0.5$ for $g_2$ as well as the parameters $\gamma =-0.3$ and  $\gamma = 
0.5$ for $g_3$. 
As step sizes we took $h_i = 2^{-i}$ with $i \in \{ 3, \dots, 12\}$. For the 
computation of the error we used the sum of the random variables $X_3$
defined in \eqref{eq:simX} as the exact solution. 
For both quadrature formulas, the $L^2(\Omega)$-norm was approximated by
taking the average over $2000$ Monte Carlo iterations. 
The parameter $\theta$ in \eqref{eq:Trap} was chosen to be $0$.

It can be seen in Figure~\ref{fig1} that both quadrature rules \eqref{eq:Q} 
and \eqref{eq:Trap} performed as expected in all our experiments.
In particular, in the case of $g_{1}$
we observed an experimental order of convergence of rate $1$ for \eqref{eq:Q}
and of rate $2$ for \eqref{eq:Trap}. For the function $g_{2}$
the randomly shifted Riemann--Maruyama rule \eqref{eq:Q} converges experimentally  
with a rate of $0.5$. Even though the assumptions for 
Theorem~\ref{th:main3} are not fulfilled, the approximation  
\eqref{eq:Trap} is comparable to \eqref{eq:Q}.
For $g_3$ we expected a convergence rate of $\gamma + \frac{1}{2}$ for 
\eqref{eq:Q} which is well visible in our two numerical tests in the second row
of Figure~\ref{fig1}. Observe that \eqref{eq:Trap} 
shows the same convergence rates in our last two experiments as \eqref{eq:Q} 
but with a better error constant. This indicates that the higher order method is
advantageous even in some situations, where the regularity of the integrand is
not sufficient to ensure a more accurate approximation. However, as already
mentioned above, we had to slightly modify the quadrature rule 
\eqref{eq:Trap}
for $g_3$ with $\gamma = -0.3$ in order to prevent an evaluation of $g_3$ at
its singularity.

To see if the number of 2000 Monte Carlo samples was sufficiently high we also 
computed the $95\%$-confidence intervals based on the central limit
theorem in Table~\ref{tab:Numerical_err}. As one can observe, the
variance of the error estimates are already reasonably small
for both quadrature rules \eqref{eq:Q} and \eqref{eq:Trap} applied to 
$g_2$ with the parameter $\gamma =-0.3$.

\section{Application to Poisson processes}
\label{sec:Poisson}

In this section we apply the randomly shifted Riemann--Maruyama rule
\eqref{eq:Q} for the approximation of a stochastic integral whose
integrand is a Poisson process. To this end, we first 
recall the definition of a Poisson process. Then we show that it
fulfills the condition of Assumption~\ref{as:G}. Finally, we perform a
numerical experiment.

\begin{definition}
  \label{def:Poisson}
  A \emph{Poisson process} $\Pi \colon [0,T] \times \Omega_W \to \N_0$
  with \emph{intensity} $a \in (0, \infty)$ is a stochastic process on
  $(\Omega_W, \F^W, \P^W)$ with the following properties:
  \begin{itemize}
    \item[(i)] There holds $\Pi(0) = 0$ almost surely.
    \item[(ii)] For any $0 \le t_0 < t_1 < \ldots < t_n \le T$, $n \in \N$, the
       random variables $(\Pi(t_i) - \Pi(t_{i-1}))_{i \in
      \{1,\ldots,n\}}$ are independent.
    \item[(iii)] For all $0 \le s \le t \le T$ the law of the 
      increment $\Pi(t) - \Pi(s)$ is the Poisson distribution with mean $a
      (t-s)$, that is
      \begin{align*}
        \P_W\big( \Pi(t) - \Pi(s) = n \big) = \frac{(a(t-s))^n}{n !} \ee^{-a(t-s)},
        \quad \text{ for all } n \in \N_0.
      \end{align*}
    \item[(iv)] The sample paths of $\Pi$ are c\`adl\`ag.
  \end{itemize}
\end{definition}

The following proposition is very useful in order to determine the temporal
regularity of a typical sample path of a Poisson process. A proof is found, for
instance, in \cite[Proposition~4.9]{peszat2007}.

\begin{prop}
  \label{prop:Poisson}
  Let $\Pi \colon [0,T] \times \Omega_W \to \N_0$ be a Poisson process with
  intensity $a \in (0, \infty)$. Then there exists an independent and
  with the same parameter $a \in (0,\infty)$ exponentially distributed family 
  of random
  variables $(Z_n)_{n \in \N}$ on $(\Omega_W,\F^W,\P_W)$ such that 
  \begin{align}
    \label{eq:repPoisson}
    \Pi(t) =
    \begin{cases}
      0, & \text{if } t \in [0, Z_1),\\
      k, & \text{if } t \in [Z_1 + \ldots + Z_k, Z_1 + \ldots + Z_{k + 1}).
    \end{cases}
  \end{align}
\end{prop}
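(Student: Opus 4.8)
The plan is to recover the $Z_n$ as the successive inter-jump times of $\Pi$ and then to identify their joint law. By properties~(i)--(iii) every increment $\Pi(t) - \Pi(s)$, $s \le t$, is $\N_0$-valued, so almost surely $\Pi$ is non-decreasing along rationals and, by the c\`adl\`ag property~(iv), non-decreasing everywhere; hence almost every sample path is a right-continuous, non-decreasing, $\N_0$-valued step function with $\Pi(0) = 0$. I would then define the jump times $T_0 := 0$ and, for $k \ge 1$, $T_k := \inf\{ t > T_{k-1} : \Pi(t) \ne \Pi(T_{k-1}) \}$, with the convention $\inf \emptyset = +\infty$. First I would check that, almost surely, $\Pi$ has no jump of size $\ge 2$ on $[0,T]$: property~(iii) gives
\begin{align*}
  \P_W\big( \Pi(t+h) - \Pi(t) \ge 2 \big) = 1 - \ee^{-ah} - ah\, \ee^{-ah} = O(h^2) \quad (h \downarrow 0),
\end{align*}
and one excludes a double jump via a dyadic subdivision of $[0,T]$ together with the Borel--Cantelli lemma (plus the elementary observation that $\Pi$ almost surely has no jump at a fixed time). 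Consequently $\Pi(t) = \sum_{k \ge 1} \one_{\{ T_k \le t \}}$, i.e.\ $\Pi(t) = k$ precisely on $\{ T_k \le t < T_{k+1} \}$; writing $Z_k := T_k - T_{k-1}$, so that $T_k = Z_1 + \cdots + Z_k$, this is exactly the representation \eqref{eq:repPoisson}, and it remains to show that $(Z_k)_{k \ge 1}$ is an independent family with $Z_1 \sim \mathrm{Exp}(a)$. (Since almost surely only finitely many jumps occur on $[0,T]$, to obtain a full infinite i.i.d.\ family one either extends $\Pi$ to a Poisson process on $[0,\infty)$, enlarging $\Omega_W$ if necessary, or simply appends freshly sampled independent $\mathrm{Exp}(a)$ variables; this does not affect \eqref{eq:repPoisson}.)

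For $Z_1 = T_1$ monotonicity gives $\{ T_1 > t \} = \{ \Pi(t) = 0 \}$, so by property~(iii) with $s = 0$ and property~(i), $\P_W(Z_1 > t) = \P_W(\Pi(t) = 0) = \ee^{-at}$, i.e.\ $Z_1 \sim \mathrm{Exp}(a)$. To treat $Z_2, Z_3, \ldots$ I would invoke the strong Markov property of the Poisson (L\'evy) process $\Pi$ at the stopping time $T_1$: the shifted process $t \mapsto \Pi(T_1 + t) - \Pi(T_1) = \Pi(T_1 + t) - 1$ is again a Poisson process of intensity $a$, independent of the history of $\Pi$ up to $T_1$ and hence of $T_1$. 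Since properties~(ii)--(iii) characterise the law of a Poisson process, the shifted process has the same distribution as $\Pi$, and by the first paragraph its first jump time --- which equals $Z_2 = T_2 - T_1$ thanks to the no-double-jump property --- is $\mathrm{Exp}(a)$-distributed and independent of $Z_1$. Iterating over $T_2, T_3, \ldots$ (formally, an induction: if $(Z_1, \ldots, Z_k)$ are i.i.d.\ $\mathrm{Exp}(a)$ and the post-$T_k$ process is an independent Poisson process, the same argument produces $Z_{k+1}$) yields that $(Z_1, \ldots, Z_n)$ are independent $\mathrm{Exp}(a)$ for every $n$, which is the assertion. An alternative that avoids the strong Markov property is to compute the joint density of the arrival times $(T_1, \ldots, T_n)$ directly from property~(ii) and the no-double-jump property --- it equals $a^n \ee^{-a s_n}$ on $\{ 0 < s_1 < \cdots < s_n \}$ --- and then to change variables to $z_k = s_k - s_{k-1}$, which has Jacobian one and turns the density into $\prod_{k=1}^{n} a\, \ee^{-a z_k}$.

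The main obstacle is making the restarting step rigorous: one must justify that $\Pi$ may be ``restarted'' at the random time $T_1$, which calls either for a clean statement of the strong Markov property of the Poisson process with respect to the (augmented, right-continuous) natural filtration of $\Pi$, or for the careful conditioning argument underlying the density computation above. The remaining ingredients --- monotonicity of the paths, the $O(h^2)$ bound on jumps of size $\ge 2$, and the identity $\P_W(\Pi(t) = 0) = \ee^{-at}$ --- are routine.
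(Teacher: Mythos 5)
The paper does not prove this proposition at all: it simply cites \cite[Proposition~4.9]{peszat2007}, so there is no in-paper argument to match. Your self-contained proof is the standard one and its structure is sound: monotonicity of the paths from the $\N_0$-valued increments plus c\`adl\`ag regularity, exclusion of jumps of size $\ge 2$ via the $O(h^2)$ bound on a dyadic partition (Borel--Cantelli is not even needed here, since the ``bad'' event is contained in each of the events ``some generation-$n$ dyadic increment is $\ge 2$'', whose probabilities tend to zero), the identity $\{T_1>t\}=\{\Pi(t)=0\}$ giving $Z_1\sim\mathrm{Exp}(a)$, and then either the strong Markov/restarting argument or the direct computation of the joint arrival-time density $a^n\ee^{-as_n}$ followed by the unit-Jacobian change of variables. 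You correctly flag the one genuinely delicate step (justifying the restart at the stopping time $T_1$) and offer two legitimate ways to close it. The only caveat worth recording is the one you already half-acknowledge: on the finite horizon $[0,T]$ the process determines only the finitely many $Z_k$ with $S_k\le T$, so producing a full infinite independent family \emph{on the given space} $(\Omega_W,\F^W,\P_W)$ may require that the space be rich enough (or that $\Pi$ be extended to $[0,\infty)$); this does not affect the representation \eqref{eq:repPoisson} on $[0,T]$, which is all the paper uses in Section~6. In short, your route is more informative than the paper's (a citation), and I see no genuine gap.
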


We recall that a random variable $Z \colon \Omega_W \to \R$ is
exponentially distributed with parameter $a \in (0,\infty)$ if
\begin{align*}
  \P_W ( Z > x ) = \ee^{-a x} \quad \text{ for all } x \in [0,\infty).
\end{align*}

Next, let us introduce an indicator function $I_c \colon
 [0,T] \to \R$, $c \in [0,\infty)$,
of the form $I_c(t) = \one_{[c,\infty)}(t)$, $t \in [0,T]$.
It then follows from Proposition~\ref{prop:Poisson} that we can formally
write $\Pi$ as a series of the form
\begin{align}
  \label{eq:series1}
  \Pi(t,\omega) = \sum_{k = 1}^{\infty} I_{S_k(\omega)}(t), \quad t
  \in [0,T], \, \omega \in \Omega_W,
\end{align}
where the random jump points $S_k(\omega)$ are given by
\begin{align}
  \label{eq:sumZ}
  S_k(\omega) := \sum_{j = 1}^k Z_j(\omega), \quad \text{for all } \omega \in
  \Omega_W.
\end{align}
The following lemma is concerned with the temporal regularity of the indicator
function $I_c$, $c \in [0,\infty)$.

\begin{lemma}
  \label{lem:indicator}
  For every $c \in [0,T]$, $\sigma \in (0,1)$, and $p \in [1,\infty)$ with
  $\sigma p < 1$ it holds true that $I_c \in W^{\sigma,p}(0,T)$. In addition,
  we have
  \begin{align*}
    \sup_{c \in [0,T]} \| I_c \|_{W^{\sigma,p}(0,T)} < \infty.
  \end{align*}
\end{lemma}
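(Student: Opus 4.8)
The plan is to estimate the Sobolev--Slobodeckij norm \eqref{eq:normWsp1} of $I_c$ directly from above and to check that the resulting bound does not depend on $c$. The $L^p$-part is immediate: since $I_c = \one_{[c,T]}$ on $[0,T]$, we have $\int_0^T |I_c(t)|^p \diff{t} = T - c \le T$.

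For the Gagliardo seminorm the key observation is that $|I_c(t) - I_c(s)|$ equals $1$ precisely when exactly one of $s,t$ lies in $[0,c)$ and the other in $[c,T]$, and vanishes otherwise. These two configurations are disjoint and interchanged by the swap $s \leftrightarrow t$, so by symmetry of the integrand
\[
  \int_0^T \int_0^T \frac{|I_c(t) - I_c(s)|^p}{|t-s|^{1 + \sigma p}} \diff{t} \diff{s}
  = 2 \int_c^T \int_0^c \frac{1}{(t-s)^{1 + \sigma p}} \diff{s} \diff{t}.
\]
The inner integral is elementary: substituting $u = t - s$ gives $\frac{1}{\sigma p}\bigl( (t-c)^{-\sigma p} - t^{-\sigma p} \bigr)$, which is nonnegative since $t - c \le t$. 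Dropping the subtracted term $t^{-\sigma p}$ and integrating over $t \in (c,T)$, the hypothesis $\sigma p < 1$ is exactly what makes $\int_c^T (t-c)^{-\sigma p} \diff{t} = \frac{(T-c)^{1-\sigma p}}{1 - \sigma p}$ finite, yielding
\[
  \int_0^T \int_0^T \frac{|I_c(t) - I_c(s)|^p}{|t-s|^{1 + \sigma p}} \diff{t} \diff{s}
  \le \frac{2}{\sigma p (1 - \sigma p)} (T - c)^{1 - \sigma p}
  \le \frac{2\, T^{1 - \sigma p}}{\sigma p (1 - \sigma p)}.
\]

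Combining the two estimates gives $\|I_c\|_{W^{\sigma,p}(0,T)}^p \le T + \frac{2 T^{1-\sigma p}}{\sigma p (1 - \sigma p)}$, a bound independent of $c \in [0,T]$, which simultaneously shows $I_c \in W^{\sigma,p}(0,T)$ and the uniform bound $\sup_{c \in [0,T]} \|I_c\|_{W^{\sigma,p}(0,T)} < \infty$. There is no serious obstacle; the only points needing care are the bookkeeping that reduces the double integral to one over the product $[0,c) \times [c,T]$ and the remark that $\sigma p < 1$ is precisely the integrability threshold at the corner $s = t = c$. For $c \in \{0,T\}$ the reduced integral is over a set of measure zero, so the seminorm vanishes (consistent with $I_c$ being a.e.\ constant), and the bound holds trivially.
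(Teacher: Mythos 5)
Your proof is correct and follows essentially the same route as the paper's: bound the $L^p$-part by $T$, reduce the Gagliardo seminorm by symmetry to the integral over $[0,c)\times[c,T]$, compute the inner integral explicitly, and use $\sigma p<1$ to integrate $(t-c)^{-\sigma p}$, arriving at the same $c$-uniform bound $\frac{2}{\sigma p(1-\sigma p)}T^{1-\sigma p}$. Nothing to add.
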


\begin{proof}
  Since the indicator function is bounded by $1$ we directly get
  \begin{align*}
    \| I_c \|_{L^p(0,T)} \le T^{\frac{1}{p}}
  \end{align*}
  for all $p \in [1,\infty)$. In addition, for every $c \in [0,T]$, $\sigma \in
  (0,1)$, and $p \in [1,\infty)$ with $\sigma p < 1$ we have
  \begin{align*}
    &\int_0^T \int_0^T \frac{|I_c(t) - I_c(s)|^p}{|t-s|^{1 + \sigma p}} \diff{t}
    \diff{s}\\
    &\quad = \int_0^c \int_c^T \frac{1}{|t-s|^{1 + \sigma p}} \diff{t}
    \diff{s}
    + \int_c^T \int_0^c \frac{1}{|t-s|^{1 + \sigma p}} \diff{t}
    \diff{s}\\
    &\quad = \frac{2}{\sigma p} \int_c^T \big( (t - c)^{-\sigma p} - t^{-\sigma
    p} \big) \diff{t}
    \le \frac{2}{\sigma p(1 - \sigma p)} T^{1 - \sigma p}. 
  \end{align*}
  Since $c \in [0,T]$ was arbitrary, the assertion follows.
\end{proof}

We are now well-prepared to verify that every Poisson process indeed satisfies
the conditions of Assumption~\ref{as:G}.

\begin{theorem}
  \label{cor:regPoisson}
  Let $\Pi \colon [0,T] \times \Omega_W \to \N_0$ be a Poisson process with
  intensity $a \in (0, \infty)$.  Then, for any $p \in [2, \infty)$, $\sigma
  \in (0,1)$ with $\sigma p < 1$ we have 
  \begin{align*}
    \Pi \in L^p( \Omega_W; W^{\sigma,p}(0,T) ).
  \end{align*}
  In addition, for every $h_0 \in (0,T]$ there exists $C_0 \in (0,\infty)$ such 
  that 
  \begin{align}
    \label{eq:Poisson_ini}
    \int_0^{h} \E_W\big[ |\Pi(t)|^p \big] \diff{t} \le C_0 h^{\max(0,p\sigma - 
      \frac{p-2}{2}) } \quad \text{for all } h \leq h_0.
  \end{align}
  In particular, every Poisson process with intensity $a \in (0,\infty)$
  fulfills the conditions of Assumption~\ref{as:G} for every $p \in
  [2,\infty)$ and $\sigma \in (0,1)$ with $\sigma p < 1$.
\end{theorem}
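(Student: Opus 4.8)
The plan is to read off the Sobolev regularity of $\Pi$ from the series representation \eqref{eq:series1}, which exhibits $\Pi$ as a superposition of indicator functions whose $W^{\sigma,p}(0,T)$-norms are uniformly controlled by Lemma~\ref{lem:indicator}, and to reduce the initial condition \eqref{eq:Poisson_ini} to the finiteness of the $p$-th moment of a Poisson random variable. Progressive measurability of $\Pi$ with respect to $(\F_t^W)_{t\in[0,T]}$ is immediate from the c\`adl\`ag property in Definition~\ref{def:Poisson}, so only the two displayed estimates require work.

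For the regularity statement $\Pi \in L^p(\Omega_W;W^{\sigma,p}(0,T))$ I would argue path by path. For $\P_W$-almost every $\omega$ we have $\Pi(T,\omega)<\infty$, and then $S_k(\omega)>T$ for every $k>\Pi(T,\omega)$ by \eqref{eq:sumZ} and \eqref{eq:repPoisson}, so that $I_{S_k(\omega)}$ vanishes on $[0,T]$ for such $k$; hence \eqref{eq:series1} collapses on $[0,T]$ to the finite sum $\Pi(\cdot,\omega)=\sum_{k=1}^{\Pi(T,\omega)} I_{S_k(\omega)}$ with each jump point $S_k(\omega)\in[0,T]$. The triangle inequality in $W^{\sigma,p}(0,T)$ and Lemma~\ref{lem:indicator} then give
\[
  \|\Pi(\cdot,\omega)\|_{W^{\sigma,p}(0,T)} \le \Pi(T,\omega)\,\sup_{c\in[0,T]}\|I_c\|_{W^{\sigma,p}(0,T)},
\]
and raising this to the power $p$ and taking $\E_W$ completes the first claim, because $\Pi(T)$ is Poisson distributed and therefore has finite $p$-th moment. (Alternatively, one may interchange $\E_W$ with the Sobolev--Slobodeckij double integral by Tonelli's theorem and use the stationarity of the increments together with the bound $\E_W[(\Pi(t)-\Pi(s))^p]\le C(p,aT)\,a\,|t-s|$, obtained by pulling one factor of the intensity out of the moment series of a Poisson variable; the remaining singularity $|t-s|^{-\sigma p}$ is integrable on $[0,T]^2$ precisely because $\sigma p<1$.)

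For the initial condition \eqref{eq:Poisson_ini} I would use that $t\mapsto\Pi(t)$ is non-decreasing, so $\E_W[|\Pi(t)|^p]\le\E_W[\Pi(T)^p]=:M<\infty$ for every $t\in[0,T]$ and hence $\int_0^h\E_W[|\Pi(t)|^p]\diff{t}\le Mh$ for all $h\le h_0$. The remaining point is purely arithmetic: since $p\ge2$ and $\sigma p<1$ force $\sigma<\frac{1}{2}$, the target exponent $e:=\max\bigl(0,p\sigma-\frac{p-2}{2}\bigr)$ satisfies $e\le1$, so that $1-e\ge0$ and, using $h\le h_0$,
\[
  Mh = M h^{1-e} h^{e} \le M h_0^{1-e}\, h^{e},
\]
which is \eqref{eq:Poisson_ini} with $C_0:=M h_0^{1-e}$. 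Combining the two parts shows that every Poisson process fulfils all requirements of Assumption~\ref{as:G} in the stated parameter range.

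I do not expect a genuine obstacle here; the only slightly delicate point is the bookkeeping in the initial condition, namely verifying $\max\bigl(0,p\sigma-\frac{p-2}{2}\bigr)\le1$, which is what makes the crude estimate $\int_0^h\E_W[|\Pi(t)|^p]\diff{t}=O(h)$ already sufficient, so that no refined small-time asymptotics of the Poisson moments are needed. If one instead prefers the Tonelli route for the regularity statement, the corresponding minor task is extracting the linear-in-$|t-s|$ factor from $\E[\mathrm{Pois}(\mu)^p]=\mathrm{e}^{-\mu}\sum_{n\ge1}n^p\mu^n/n!$.
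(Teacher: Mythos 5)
Your proposal is correct and follows essentially the same route as the paper: collapse the series \eqref{eq:series1} to a finite sum of at most $\Pi(T,\omega)$ indicator functions, invoke the uniform bound of Lemma~\ref{lem:indicator}, and control everything by the finite $p$-th moment of the Poisson variable $\Pi(T)$, while the initial condition reduces to the crude bound $\int_0^h\E_W[|\Pi(t)|^p]\diff{t}\le Mh$ together with the observation that $\max(0,p\sigma-\tfrac{p-2}{2})<1$. The only cosmetic difference is that you apply the triangle inequality for the full $W^{\sigma,p}(0,T)$-norm pathwise, whereas the paper decomposes over the events $\{\Pi(T)=n\}$ and uses the convexity estimate $|\sum_{k=1}^n a_k|^p\le n^{p-1}\sum_k|a_k|^p$ inside the Sobolev--Slobodeckij double integral; both yield the same bound $\E_W[\Pi(T)^p]\sup_{c\in[0,T]}\|I_c\|^p_{W^{\sigma,p}(0,T)}$.
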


\begin{proof}
  First, let $p \in [1,\infty)$ be arbitrary. We observe that a
  typical sample path of $\Pi$ is nonnegative and increasing.
  Hence, we have $\sup_{t \in [0,T]} \| \Pi(t) \|_{L^p(\Omega_W)} 
  = \| \Pi(T) \|_{L^p(\Omega_W)} < \infty$ by the Poisson distribution
  of $\Pi(T)$ with mean $aT$. From this we immediately obtain
  \begin{align*}
    \int_0^{h} \E_W\big[ |\Pi(t)|^p \big] \diff{t} \le C_0 h
  \end{align*}
  for all $h \leq h_0$. Since $\max(0,p\sigma -  \frac{p-2}{2}) < 1$ 
  for $p \in [2,\infty)$ and $\sigma p < 1$ condition \eqref{eq:Poisson_ini}
  follows. 

  Furthermore, we obtain $\P_W(A) = 1$
  where $A \in \F^W$ denotes the event
  \begin{align*}
    A = \big\{ \omega \in \Omega_W \, : \, \sup_{t \in [0,T]} \Pi(t,\omega) =
    \Pi(T,\omega) <
    \infty \big\}.
  \end{align*}
  Then, for every $\omega \in A$ the series in \eqref{eq:series1} 
  consists in fact of only finitely many indicator functions.
  More precisely, there exists $N(\omega) := \Pi(T,\omega) \in \N_0$ such that
  \begin{align}
    \label{eq:series}
    \Pi(t,\omega) = \sum_{k = 1}^{N(\omega)} I_{S_k(\omega)}(t), \quad t
    \in [0,T],
  \end{align}
  where $S_k(\omega)$ are defined in \eqref{eq:sumZ}. 
  Together with Lemma~\ref{lem:indicator} this proves that for every $p \in 
  [1,\infty)$, $\sigma \in (0,1)$ with $\sigma p < 1$ we have
  \begin{align}
    \label{eq:pathreg}
    \P_W \big( 
    \{ \omega \in \Omega_W\, : \, \Pi(\cdot, \omega) \in W^{\sigma,p}(
    0,T) \} \big) = 1.
  \end{align}
  Hence, it remains to show that
  \begin{align*}
    \E_W \Big[ \int_0^T \int_0^T \frac{|\Pi(t) - \Pi(s)|^p}{|t - s
    |^{1 + \sigma p} } \diff{s} \diff{t} \Big] < \infty.
  \end{align*}
  To this end, we insert the representation \eqref{eq:series} 
  and obtain
  \begin{align*}
    &\E_W \Big[ \int_0^T \int_0^T \frac{|\Pi(t) - \Pi(s)|^p}{|t - s
    |^{1 + \sigma p} } \diff{s} \diff{t} \Big]\\
    &\quad = \sum_{n = 0}^\infty \int_{\Omega_W} \one_{\{\Pi(T,\omega) =
    n\}}(\omega) 
    \int_0^T \int_0^T \frac{|\Pi(t,\omega) - \Pi(s,\omega)|^p}{|t - s
    |^{1 + \sigma p} } \diff{s} \diff{t} \diff{\P_W(\omega)}\\
    &\quad \le \sum_{n = 0}^\infty \sum_{k = 1}^n
    \int_{\Omega_W} \one_{\{\Pi(T,\omega) = n\}}(\omega) n^{p-1}
    \int_0^T \int_0^T \frac{ | I_{S_k(\omega)}(t) -
    I_{S_k(\omega)}(s) |^p}{|t - s
    |^{1 + \sigma p} } \diff{s} \diff{t} \diff{\P_W(\omega)}\\
    &\quad \le \sum_{n = 0}^\infty \sum_{k = 1}^n
    \int_{\Omega_W} \one_{\{\Pi(T,\omega) = n\}}(\omega) n^{p-1}
    \big\| I_{S_k(\omega)} \big\|^p_{W^{\sigma,p}(0,T)} 
    \diff{\P_W(\omega)}\\
    &\quad \le \sup_{c \in [0,T]} \big\| I_{c} \big\|^p_{W^{\sigma,p}(0,T)} 
    \sum_{n = 0}^\infty n^{p} \int_{\Omega_W} \one_{\{\Pi(T,\omega) =
    n\}}(\omega) \diff{\P_W(\omega)}\\
    &\quad \le \sup_{c \in [0,T]} \big\| I_{c} \big\|^p_{W^{\sigma,p}(0,T)} 
    \big\| \Pi(T) \big\|_{L^p(\Omega_W)}^p,
  \end{align*}
  where we also used that $S_k(\omega) \in [0,T]$ for all $\omega \in
  \{ \Pi(T) = n \}$ and $1 \le k \le n$.
  An application of Lemma~\ref{lem:indicator} then completes the proof.
\end{proof}

\begin{figure}[t] 
	\centering
	\includegraphics[width=0.7\textwidth]{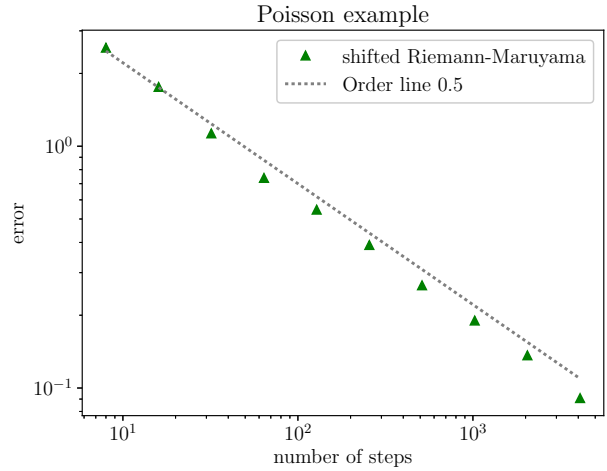}
	\caption{$L^2$-convergence of the lower order scheme \eqref{eq:Q} to the
  It\^{o}-integral of a Poisson process with intensity $a = \frac{3}{4}$ on
  the interval $[0,10]$ with $2000$ Monte Carlo samples. 
  }
  \label{fig2}
\end{figure}

\begin{table}[h]
  \caption{Numerical example, for Poisson process}
  \label{tab:Poisson_L2_err}
  \begin{tabular}{p{1.5cm}p{1.5cm}p{1.2cm}p{4cm}}
     \noalign{\smallskip}\hline\noalign{\smallskip}
     $h$     & error & EOC & 95\% conf. interval \\ 
     \noalign{\smallskip}\hline\noalign{\smallskip}
      1.2500      & 2.55293 &  & [2.45273, 2.64935]\\ 
      0.6250  & 1.65424 & 0.63 & [1.58914, 1.71688]\\ 
      0.3125  & 1.12986 & 0.55 & [1.08814, 1.17010]\\ 
      0.1562  & 0.76850 & 0.56 & [0.73918, 0.79675]\\ 
      0.0781  & 0.54830 & 0.49 & [0.52936, 0.56660]\\ 
      0.0391  & 0.37698 & 0.54 & [0.36380, 0.38971]\\ 
      0.0195  & 0.26343 & 0.52 & [0.25427, 0.27227]\\ 
      0.0098  & 0.17800 & 0.57 & [0.17186, 0.18394]\\ 
      0.0049  & 0.12968 & 0.46 & [0.12501, 0.13419]\\ 
  \end{tabular}
\end{table}

We close this section with a short numerical experiment. Hereby we
applied the randomly shifted Riemann--Maruyama quadrature rule for the
approximation of an It\^{o}-integral whose integrand is a Poisson process. 
For
the error plot displayed in Figure~\ref{fig2} we chose the final time $T=10$
and the intensity parameter $a = \frac{3}{4}$. As step sizes we took 
$h \in \{ T\, 2^{-i}\, : \, i = 3,\ldots,11\}$. For the approximation of the
error we compared the result of the quadrature rule 
with a given step size $h$ to a numerical reference solution
with the smaller step size $\frac{h}{16}$ driven by the same stochastic
trajectories. In addition, the $L^2(\Omega)$-norm was approximated by a
standard Monte Carlo simulation with $2000$ independent samples.

As one can see in Figure~\ref{fig2}, the randomly shifted Riemann--Maruyama
rule performed as expected with an experimental order of convergence close to
$\frac{1}{2}$, in agreement with the regularity of the Poisson process. Since
we already knew from Section~\ref{sec:num} that the higher order quadrature
rule \eqref{eq:Trap} does not yield an advantage if the integrand has jumps, it
was not implemented in this example. In Table~\ref{tab:Poisson_L2_err} we also
show the numerical values of the computed errors and corresponding
asymptotically valid  $95\%$-confidence intervals based on the central limit
theorem. Apparently, already with just $2000$ Monte Carlo samples the variance
of the error estimator is quite decent. 

\section*{Acknowledgement}

The authors wish to express their gratitude to Stefan Heinrich for many
interesting discussions on this topic.
This research was carried out in the framework of \textsc{Matheon}
supported by Einstein Foundation Berlin. The second named author also
gratefully acknowledges financial support by the German Research Foundation
through the research unit FOR 2402 -- Rough paths, stochastic partial
differential equations and related topics -- at TU Berlin. 


\end{document}